\subjclass[2010]{17B66,11F22}
\theoremstyle{plain}
\def\Ab{{\mathbb A}}
\def\kf{{\mathbf k}}
\def\Zb{{\mathbb Z}}
\def\Ac{{\mathcal A}}
\def\Dc{{\mathcal D}}
\def\Fc{{\mathcal F}}
\def\Lc{{\mathcal L}}
\def\Oc{{\mathcal O}}
\def\Uc{{\mathcal U}}
\def\Vc{{\mathcal V}}
\def\mf{{\mathfrak m}}
\def\pf{{\mathfrak p}}
\def\Ds{{\Dc}}
\def\C~{\widetilde{C}}
\def\AV{\Ac\Vc}
\def\AU{\Ac \otimes U}
\def\smash{\Ac\# U(\Vc)}
\def\Lp{\Lc_+}
\def\End{{\rm End}}
\newcommand{\dx}[1]{{\frac{\partial}{\partial x_{#1}}}}
\newcommand{\pd}[2]{{\frac{\partial #1}{\partial #2}}}
\def\bF{{\overline \Fc}}
\def\bro{{\overline \rho}}
\def\id{{\rm id}}
\newtheorem{theorem}{Theorem}[section]
\newtheorem{definition}[theorem]{Definition}
\newtheorem{lemma}[theorem]{Lemma}
\newtheorem{corollary}[theorem]{Corollary}
\newtheorem{solution*}{Solution}
\newtheorem{proposition}[theorem]{Proposition}
\newtheorem{remark}[theorem]{Remark}
\numberwithin{equation}{section}
\newcommand{\Spec}{\operatorname{Spec}}
\newcommand{\Der}{\operatorname{Der}} 
\newcommand\dd[1]{\frac{\partial}{\partial #1}}
\newcommand\eps{\varepsilon}
\DeclareMathOperator{\Hom}{\mathscr{H}\text{\kern -3pt {\calligra\large om}}\,}
\DeclareMathOperator{\Tor}{Tor}
\DeclareMathOperator{\depth}{depth}
\title[$\AV$ modules]
{$\AV$ modules of finite type on affine space}
\author{Yuly Billig}
\address{School of Mathematics and Statistics, Carleton University, Ottawa, Canada}
\email{billig@math.carleton.ca}
\author{Colin Ingalls}
\email{cingalls@math.carleton.ca}
\author{Amir Nasr}
\email{amir.nasr@carleton.ca}
\date{\today}
\newcommand{\leqnomode}{\tagsleft@true}
\newcommand{\reqnomode}{\tagsleft@false}
\begin{document}
\begin{abstract}
{We study the category of modules admitting compatible actions of the Lie algebra $\Vc$ of vector fields on an affine space and the algebra $\Ac$ of polynomial functions. We show that modules in this category which are finitely generated over $\Ac$, are free. We also show that
this pair of compatible actions is equivalent to commuting actions of the algebra of differential operators and the Lie algebra of vector fields vanishing at the origin. This allows us to construct explicit realizations of such modules as gauge modules.}
\end{abstract}
\maketitle

\begin{section}{Introduction}
The study of Lie algebras of vector fields originated in the works of the founders of Lie theory -- Sophus Lie and \'Elie Cartan~\cite{Lie1},~\cite{Lie2},~\cite{C}.
Lie and Cartan introduced four families of simple Lie algebras of vector fields on an affine space: $W_n$, $S_n$, $H_n$ and $K_n$.
These were the first examples of simple infinite-dimensional Lie algebras.

 David Jordan~\cite{J1},~\cite{J2},  proved that Lie algebra $\Vc$ of vector fields on an arbitrary smooth irreducible affine variety $X$ is simple.
This yields a vast class of simple infinite-dimensional Lie algebras. These Lie algebras have a very different structure from simple finite-dimensional 
Lie algebras over an algebraically closed field of characteristic zero. It was shown in~\cite{BF} that Lie algebras of vector fields may have no
non-zero semisimple or nilpotent elements. For this reason, the standard machinery of roots and weights is not applicable for this class of 
simple Lie algebras.

 A systematic study of representation theory of Lie algebras of vector fields on affine varieties was initiated in~\cite{BFN},~\cite{BN},~\cite{BNZ}.
It is hopeless to expect a classification of general representations of such algebras. We have to impose additional constraints on representations 
that will make development of a structure theory possible, yet still include important and interesting families of representations.

A constraint that we impose is the existence of a compatible action between  the commutative algebra $\Ac$ of functions and the Lie algebra of vector fields $\Vc$ on a variety $X$. We wish to study
$\Ac\Vc$ modules, where compatibility condition on a module $M$ is the Leibniz rule:
$$\eta (f m) = f (\eta m) + \eta(f) m,$$
for $\eta \in \Vc, f \in \Ac$ and $m \in M$.

Viewing $\Ac$ as a module for the Hopf algebra $U(\Vc)$, we can form a smash product $\Ac \# U(\Vc)$~\cite{M}. Then $\AV$ modules are precisely
the modules for the associative algebra $\Ac \# U(\Vc)$.

One family of $\AV$ modules is given by $\Dc$ modules, where $\Dc$ is the associative algebra of differential operators of an arbitrary order on $X$.
Since we have a surjective algebra homomorphism $\smash \rightarrow \Dc$, every $\Dc$ module is automatically an $\AV$ module. An important class of $\Dc$ modules is given
by spaces of sections of vector bundles with a connection on $X$. In this example the covariant derivative $\nabla$ is a $\Vc$-action that satisfies an additional 
property
\begin{equation}
\label{nabla}
\nabla_{f\eta} = f \nabla_\eta, \ \ \ f\in\Ac, \eta \in \Vc.
\end{equation}

In fact, any $\AV$-action satisfying (\ref{nabla}), extends to the action of $\Dc$~\cite{Bav},~\cite{G}.
There are also natural examples of $\AV$ modules that do not satisfy (\ref{nabla}). In particular, the adjoint representation of $\Vc$
is an $\AV$ module in which the $\Vc$-action can not be interpreted as a covariant derivative. In a way, studying $\AV$ modules amounts to finding
structures on vector bundles that give rise to $\Vc$-action on the space of sections, generalizing the concept of a connection.

 This paper has two main results. We prove that when $X= \Ab^n$ is an affine space, every $\AV$ module of finite type, i.e., finitely generated 
over $\Ac$, is maximal Cohen-Macaulay, and hence free as an $\Ac$ module. 
We also show that in the case of an arbitrary smooth irreducible affine variety, $\AV$ modules of a finite type have no $\Ac$-torsion.

 The second result we prove is a remarkable isomorphism of associative algebras in case when $X$ is an affine space:
$$ \smash \cong \Dc \otimes U(\Lc_+),$$
where $\Lp$ is the subalgebra of vector fields on $\Ab^n$ that vanish at the origin.
Note that even though $\Vc$ embeds into $\Dc,$ and $\Lp$ embeds into $\Vc$, the isomorphism maps, restricted to $\Vc$ and $\Lp$ respectively, are
highly non-trivial, and are not the naive embedding maps.

Combining these two results we obtain a description of a finite type $\AV$ modules over an affine space -- these are free $\Ac = \kf [x_1, \ldots, x_n]$ modules of a finite rank $\Ac \otimes U$ with 
commuting actions of $\Dc$ and $\Lp$ where $U$ is a finite dimensional $\kf$ vector space.  The action of $\Lp$ must be $\Ac$-linear,
while the action of $\Dc$ may be encoded with an affine connection on the affine space.
In our chosen frame, differential operators $\dd{x_i} \in \Ds$ will act on
$\Ac \otimes U$ via $\dd{x_i} \otimes 1 + B_i$, where gauge fields $B_i$ are 
$\Ac$-linear operators satisfying the following two conditions:

(1) $[ B_i, B_j ] = \pd{B_i}{x_j} - \pd{B_j}{x_i}$,

(2) $\left[ \dd{x_i} \otimes 1 + B_i, \Lp \right] = 0$.

After introducing definitions and notations in Section 2, we prove our isomorphism theorem in Section 3. Then in Section 4 we show that $\AV$ modules of a finite type on an affine space are free. We apply these results in Section 5 to exhibit explicit structure 
of $\AV$ modules of a finite type on an affine space, proving a conjecture of
Billig-Futorny-Nilsson~\cite{BFN} in this case.

\end{section}

\begin{section}
{Definitions and Notations}

In this work we let $\kf$ be an algebraically closed field of characteristic zero. 
The algebra of functions on affine space $X=\Ab^n$ is $\Ac=\kf[x_1, \ldots, x_n]$. Lie algebra of polynomial vector fields on $\Ab^n$ is the Witt algebra
$\Vc=\Der \Ac=\bigoplus\limits_{i=1}^n \Ac\dfrac{\partial}{\partial x_i}$. 
This Lie algebra has a natural $\Zb$ grading by degree 
$\Vc=\Vc_{-1}\oplus \Vc_0\oplus \Vc_1 \oplus \cdots$.
We denote by $\Lp$ the subalgebra of vector fields that vanish at the origin: $\Lc_+ \simeq \Vc_0\oplus \Vc_1\oplus\cdots$.

Consider the Hopf algebra structure on $U(\Vc)$ with copruduct $\Delta$. For $u \in U(\Vc)$ we write $\Delta(u) = \sum\limits_i u_i^{(1)} \otimes u_i^{(2)}$. 
% and in particular, $\Delta(u) =  u \otimes 1 + 1 \otimes u$ for $u \in \Vc$.
The commutative algebra $\Ac$ is naturally a module for $U(\Vc)$. The smash product $\smash$ is defined as the vector space $\Ac \otimes_\kf U(\Vc)$ with
the associative product
$$ (f \otimes u) (g \otimes v) = \sum\limits_i (f u_i^{(1)} (g)) \otimes (u_i^{(2)} v), \ \ \ f, g \in \Ac, u, v \in U(\Vc).$$
In particular, for $f \in \Ac$ and $\eta \in \Vc$, we have
$$ (1 \otimes \eta) (f \otimes 1) = f \otimes \eta + \eta(f) \otimes 1 .$$
since $\Delta(\eta) = \eta \otimes 1 + 1 \otimes \eta.$

% for $u,v \in \Vc$ we have
% $$ (f \otimes u) (g \otimes v) = fu(g) \otimes v + fg \otimes uv.$$

The algebra of differential operators $\Dc$ on affine space $\Ab^n$ (also known as the Weyl algebra) may be defined as an associative subalgebra in 
$\End_\kf (\Ac)$, generated by operators of multiplication by $x_i$ and differentiation $\dfrac{\partial}{\partial x_j}$, $1 \leq i,j \leq n$.
We will be using multi-index notations, denoting for $s = (s_1, \ldots, s_n) \in \Zb_{\geq 0}^n$:
$$ x^s = x_1^{s_1} \cdots  x_n^{s_n},$$
$$ \partial^s = \left(\dfrac{\partial}{\partial x_1}\right)^{s_1} \cdots  \left(\dfrac{\partial}{\partial x_n}\right)^{s_n}.$$
With these notations a basis of $\Dc$ is given by $\{ x^r \partial^s \, | \, r,s\in\Zb_{\geq 0}^n \}$.

\end{section}

\begin{section}
{An isomorphism of associative algebras}

In this section we will prove the following theorem:

\begin{theorem}
\label{iso}
Let $\Ac = \kf[x_1, \ldots, x_n]$ be the algebra of functions on an affine space $\Ab^n$, 
let $\Vc = \mathop\oplus\limits_{i=1}^n \Ac \dd{x_i}$ be the Lie algebra of vector fields on $\Ab^n$,
and let $\Dc$ be the algebra of differential operators on $\Ab^n$, i.e., the Weyl algebra.
The following associative algebras are isomorphic:
$$\Ac\# \Uc(\Vc) \cong \Ds\otimes \Uc(\Lc_+).$$
The isomorphisms
\begin{equation*}
\label{phimap}
\varphi:\, \smash\rightarrow \Ds\otimes \Uc(\Lc_+)
\end{equation*}
and its inverse
\begin{equation*}
\label{psimap}
\psi:\, \Ds\otimes \Uc(\Lc_+) \rightarrow \smash
\end{equation*}
are defined as
\begin{equation*}
\varphi_{|_{\Vc}}\left(x^k\frac{\partial}{\partial x_p}\right)=x^k\frac{\partial}{\partial x_p}\otimes 1+\left(\sum_{0<m\leq k}{k\choose m}x^{k-m}\otimes x^m\frac{\partial}{\partial x_p}\right),
\end{equation*}
$\varphi$ restricted to $\Ac$ is the natural embedding into $\Dc$,
\begin{align*}
&\psi_{|_\Ds}\left(x^r\partial^s\right)=x^r\#\left(\frac{\partial}{\partial x_1}\right)^{s_1} \cdots \left(\frac{\partial}{\partial x_n}\right)^{s_n}\\
&\psi_{|_{\Lc_+}}\left(x^m\frac{\partial}{\partial x_p}\right)=\sum_{0\leq k\leq m}\left((-1)^{m-k}{m \choose k} x^{m-k}\#x^k\frac{\partial}{\partial x_p}\right).
\end{align*}
\end{theorem}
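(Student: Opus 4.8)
The plan is to verify directly that $\varphi$ and $\psi$ as defined on generators extend to well-defined algebra homomorphisms, and then to check they are mutually inverse. Since $\smash$ is generated by $\Ac$ and $\Vc$ subject to the relations (i) $\Ac$ is a commutative algebra, (ii) $\Vc$ is a Lie algebra under commutator, and (iii) the smash relation $(1\otimes\eta)(f\otimes 1) = f\otimes\eta + \eta(f)\otimes 1$, to define $\varphi$ it suffices to check that the images $\varphi(f)$ (for $f\in\Ac$) and $\varphi(\eta)$ (for $\eta\in\Vc$, as given by the stated formula on a basis $x^k\partial/\partial x_p$) satisfy these three families of relations inside $\Ds\otimes\Uc(\Lp)$. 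The first is immediate since $\varphi|_\Ac$ is the natural embedding into the commutative subalgebra $\Ac\subset\Ds$. For (ii) I would show that $\eta\mapsto \varphi(\eta)$ is a Lie algebra homomorphism from $\Vc$ to $\Ds\otimes\Uc(\Lp)$ under commutator; equivalently, writing $\varphi(\eta) = \eta\otimes 1 + \Theta(\eta)$ where $\Theta(\eta) = \sum_{0<m\le k}\binom{k}{m}x^{k-m}\otimes x^m\partial/\partial x_p$ for $\eta = x^k\partial/\partial x_p$, one must check a ``twisted cocycle'' identity relating $[\eta,\eta']$, the naive bracket in the $\Ds$-factor, and the brackets of the $\Theta$ terms in the $\Uc(\Lp)$-factor. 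For (iii) one checks $[\varphi(\eta),\varphi(f)] = \varphi(\eta(f))$, which unravels to a binomial identity since $[\eta\otimes 1 + \Theta(\eta),\, g\otimes 1] = \eta(g)\otimes 1$ (the $\Theta$ part commutes with $\Ac\otimes 1$ because $x^m\partial/\partial x_p\in\Lp$ lands in the $\Uc(\Lp)$-tensor-factor and $x^{k-m}g\in\Ac\subset\Ds$).

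Symmetrically, to define $\psi$ I would use that $\Ds\otimes\Uc(\Lp)$ is generated by $\Ds$ (generated in turn by the $x_i$ and $\partial/\partial x_i$ with Weyl relations $[\partial/\partial x_i, x_j]=\delta_{ij}$) and by $\Lp$, with the two factors commuting. So one must verify: that $\psi|_{\Ds}$ respects the Weyl relations; that $\psi|_{\Lp}$ is a Lie homomorphism $\Lp\to\smash$ under commutator; and crucially that $[\psi(d),\psi(\xi)]=0$ for all $d\in\Ds$, $\xi\in\Lp$. This last commuting-of-the-two-factors condition I expect to be the heart of the matter: it requires showing that inside $\smash$ the ``corrected'' vector fields $\psi(x^m\partial/\partial x_p) = \sum_{0\le k\le m}(-1)^{m-k}\binom{m}{k}x^{m-k}\#x^k\partial/\partial x_p$ commute with all of $\psi(\Ds)$, in particular with $x_i\#1$ and with $1\#\partial/\partial x_i$. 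The commutation with the $x_i$'s is a direct binomial computation in the smash product; the commutation with $\partial/\partial x_i = 1\#\partial/\partial x_i$ is the delicate one and will again reduce to a binomial/Vandermonde-type identity after using the smash relation to move $\partial/\partial x_i$ past the polynomial coefficients.

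The main obstacle, as indicated, is bookkeeping: all of the above identities are, after expansion, finite binomial-coefficient identities (essentially repeated uses of the Vandermonde/Chu identity $\sum_j\binom{a}{j}\binom{b}{c-j}=\binom{a+b}{c}$ and the inversion formula underlying the pairing of $\varphi|_\Vc$ with $\psi|_{\Lp}$), but the multi-index notation and the presence of two tensor factors make them error-prone. I would organize the computation by first treating the one-variable case $n=1$ to expose the combinatorial core, and then remark that the multivariable case follows formally since the $\partial/\partial x_p$ direction is fixed in each generator and the $x^k$ factor splits as a product over coordinates.

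Once $\varphi$ and $\psi$ are established as algebra homomorphisms, checking $\psi\circ\varphi=\id$ and $\varphi\circ\psi=\id$ reduces to evaluating the composites on the generating sets. For $\varphi\circ\psi$ on $\Ds$: $\psi(x^r\partial^s)$ is a product of $x_i$'s and $\partial/\partial x_i$'s in $\smash$, and $\varphi$ sends $x_i\mapsto x_i$, $\partial/\partial x_i\mapsto \partial/\partial x_i\otimes 1$ (the correction term $\Theta(\partial/\partial x_i)$ vanishes since for $k=0$ the sum $\sum_{0<m\le 0}$ is empty), so $\varphi\psi(x^r\partial^s) = x^r\partial^s\otimes 1$ as required. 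For $\varphi\circ\psi$ on $\Lp$ and $\psi\circ\varphi$ on $\Vc$ one gets, after expansion, a double sum whose inner sum collapses by the standard identity $\sum_{k\le j\le m}(-1)^{m-j}\binom{m}{j}\binom{j}{k} = \delta_{k,m}$ (and its analogue), yielding the identity. For $\psi\circ\varphi$ on $\Ac$ it is immediate. This completes the proof that $\varphi$ is an isomorphism with inverse $\psi$.
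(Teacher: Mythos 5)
Your proposal follows essentially the same route as the paper: establish that $\varphi$ and $\psi$ preserve the defining relations of $\smash$ and $\Ds\otimes \Uc(\Lc_+)$ respectively (with the commutativity of $\psi(\Ds)$ and $\psi(\Uc(\Lc_+))$ correctly identified as the delicate point), reduce everything to binomial/Vandermonde-type identities, and verify $\psi\circ\varphi=\id$ and $\varphi\circ\psi=\id$ on generators via the inversion identity $\sum_{j\leq m\leq k}(-1)^{m-j}\binom{k-j}{m-j}=\delta_{j,k}$. The only cosmetic difference is that the paper proves the multi-index identities directly with a generating-function computation rather than reducing to the case $n=1$.
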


%In this section we show that the associative algebras $\Ac\# \Uc(\Vc)$ and $\Ds\otimes \Uc(\Lc_+)$ are isomorphic if $X$ is an affine space. So in this section we let $X=\Ab^n$.

The proof will be split into several lemmas, with the first auxillary lemma being purely combinatorial. 
 In this section $j$, $k$, $l$, $m$ will denote multi-indices and $x$, $y$ 
will denote multi-variables. We will denote the standard basis of $\Zb^n$ by
$\{ \eps_1, \ldots, \eps_n \}$.

\begin{lemma}
\label{comb}
(a) 
\begin{equation*}
\sum_{0 \leq m \leq k} (-1)^m {k \choose m} = \delta_{k, 0},
\end{equation*}
(b)
\begin{align*}
&\sum_{0 < m \leq k} \sum_{0 < j \leq l} {k \choose m} {l \choose j}
j_p x^{k+l-m-j} y^{m+j-\eps_p} = \\
&l_p \sum_{0 < j \leq k +l - \eps_p} {k + l - \eps_p \choose j} 
x^{k+l-j-\eps_p} y^j
- l_p \sum_{0 < j \leq l - \eps_p} {l - \eps_p \choose j} 
x^{k+l-j-\eps_p} y^j.
\end{align*}
(c)
\begin{align*}
\sum_{0 \leq m \leq k} \sum_{0 \leq j \leq l} &(-1)^{k+l-m-j} {k \choose m} {l \choose j}
j_p x^{k+l-m-j} y^{m+j-\eps_p} = \\
&l_p \sum_{0 \leq j \leq k +l - \eps_p} (-1)^{k+l-j-\eps_p} {k + l - \eps_p \choose j} 
x^{k+l-j-\eps_p} y^j
\end{align*}

\end{lemma}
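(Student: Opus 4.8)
The plan is to derive all three identities coordinatewise from two standard facts about binomial coefficients: the absorption rule $j_p\binom{l}{j}=l_p\binom{l-\eps_p}{j-\eps_p}$ and the Vandermonde convolution $\sum_{m+j=i}\binom{k}{m}\binom{l}{j}=\binom{k+l}{i}$. Here and below $\binom{a}{b}=\prod_q\binom{a_q}{b_q}$ with the convention $\binom{a_q}{b_q}=0$ unless $0\le b_q\le a_q$, a sum such as $\sum_{0<m\le k}$ runs over all multi-indices $m\ne 0$ (with $\binom{k}{m}$ enforcing the upper bound), and $(-1)^{r}$ for a multi-index $r$ abbreviates $(-1)$ raised to the sum of the entries of $r$. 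Part (a) is then immediate: $\sum_{0\le m\le k}(-1)^m\binom{k}{m}=\prod_q\sum_{m_q=0}^{k_q}(-1)^{m_q}\binom{k_q}{m_q}=\prod_q(1-1)^{k_q}$, which equals $1$ if every $k_q=0$ and $0$ otherwise, i.e. $\delta_{k,0}$.

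For (b) and (c) I would first dispose of the degenerate case $l_p=0$: then $j\le l$ forces $j_p=0$, so the factor $j_p$ kills the left-hand side termwise, while the factor $l_p$ kills the right-hand side, and the identity is trivial. So assume $l_p\ge 1$. In both identities I apply the absorption rule to rewrite $j_p\binom{l}{j}$ as $l_p\binom{l-\eps_p}{j-\eps_p}$ (the terms with $j_p=0$ vanish either way), pull the constant $l_p$ out front, and substitute $j'=j-\eps_p$. Under this substitution the exponent $k+l-m-j$ becomes $(k+l-\eps_p)-m-j'$ and $m+j-\eps_p$ becomes $m+j'$, so after grouping the summands by the value $i:=m+j'$ the monomial $x^{(k+l-\eps_p)-i}y^{i}$ and, in (c), the sign $(-1)^{(k+l-\eps_p)-i}$ depend only on $i$ and pull out of the inner sum over the splittings of $i$.

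In (c) the indices $m$ and $j'$ range over the full intervals $0\le m\le k$ and $0\le j'\le l-\eps_p$, so the inner sum is exactly $\binom{k+l-\eps_p}{i}$ by Vandermonde, and summing over $i\ge 0$ (equivalently $0\le i\le k+l-\eps_p$) reproduces the claimed right-hand side with $i$ renamed $j$. In (b), the original constraint $m\ne 0$ forces me to remove the $m=0$ summand from the Vandermonde value, which is $\binom{l-\eps_p}{i}$; thus the left-hand side equals $l_p\sum_{i\ge 0}\bigl(\binom{k+l-\eps_p}{i}-\binom{l-\eps_p}{i}\bigr)x^{(k+l-\eps_p)-i}y^{i}$. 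The key observation is that the $i=0$ contributions of the two sums are both equal to $l_p\,x^{k+l-\eps_p}$ and therefore cancel; hence the effective range is $i\ne 0$, which becomes $0<i\le k+l-\eps_p$ in the first sum and $0<i\le l-\eps_p$ in the second — exactly the right-hand side of (b).

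The whole argument is essentially bookkeeping of index ranges; the only step that is not automatic is the treatment of the boundary term $m=0$ in (b) together with the cancellation of the two $i=0$ terms, which is what restores the asymmetric summation ranges appearing in the statement. A minor point requiring care is that ``$0<m$'' for multi-indices excludes only the single index $m=0$, not every index with some vanishing entry.
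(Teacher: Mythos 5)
Your proof is correct, but it takes a genuinely different route from the paper's. The paper proves (b) and (c) with a generating-function computation: it recognizes the left-hand side of (b) as $\bigl((x+y)^k-x^k\bigr)\frac{\partial}{\partial y_p}\bigl((x+y)^l-x^l\bigr)$, evaluates this to $l_p(x+y)^{k+l-\eps_p}-l_p x^k(x+y)^{l-\eps_p}$, and recovers the stated right-hand side by adding and subtracting $l_p x^{k+l-\eps_p}$ and re-expanding; part (c) is the analogous computation with full summation ranges and alternating signs (i.e.\ with $(y-x)^k$ and $(y-x)^l$). You instead argue termwise, using the absorption identity $j_p\binom{l}{j}=l_p\binom{l-\eps_p}{j-\eps_p}$ followed by the Vandermonde convolution, and you recover the asymmetric ranges in (b) by deleting the $m=0$ slice of the Vandermonde sum and noting that the two $i=0$ contributions cancel. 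The two arguments are of course cousins --- Vandermonde is exactly the coefficient form of $(x+y)^k(x+y)^{l-\eps_p}=(x+y)^{k+l-\eps_p}$ --- but yours makes the index bookkeeping explicit (in particular the boundary term responsible for the second sum on the right of (b)), at the cost of length, while the paper's version compresses the same bookkeeping into a two-line identity of polynomials. Your attention to the conventions --- that $0<m$ excludes only the single multi-index $m=0$, and that the case $l_p=0$ is degenerate on both sides --- is correct and matches what the paper implicitly assumes.
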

\begin{proof}
Part (a) is an immediate consequence of the binomial formula. Let us prove the formula in (b). Left hand side simplifies to 
\begin{align*}
\left( (x+y)^k - x^k \right) \dd{y_p} \left( (x+y)^l - x^l \right) 
= l_p (x+y)^{k+l-\eps_p} - l_p x^k (x+y)^{l_p - \eps_p} \\
=  l_p \left( (x+y)^{k+l-\eps_p} - x^{k+l-\eps_p} \right)
 - l_p x^k \left( (x+y)^{l_p - \eps_p} - x^{l_p - \eps_p} \right),
\end{align*} 
and we obtained the right hand side. The proof of part (c) is completely analogous.
\end{proof}

\begin{lemma}
\label{phi}
The map $\varphi:\, \smash\rightarrow \Ds\otimes \Uc(\Lc_+)$ 
given in Theorem~\ref{iso} is a homomorphism of associative algebras.
\end{lemma}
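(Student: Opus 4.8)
The plan is to verify that $\varphi$ respects all defining relations of the smash product $\smash$. Since $\smash$ is generated by $\Ac$ and $\Vc$ (more precisely, by $\{x^k \otimes 1\}$ and $\{1 \otimes \dd{x_p}\}$ together with the higher vector fields $x^k \dd{x_p}$), and we are given $\varphi$ on each of these generating sets, it suffices to check: (i) $\varphi$ restricted to $\Ac$ is an algebra homomorphism --- this is immediate since it is the natural embedding $\Ac \hookrightarrow \Dc \hookrightarrow \Dc \otimes U(\Lp)$ and $\Ac$ is a subalgebra of $\Dc$; (ii) $\varphi$ restricted to $\Vc$ is a Lie algebra homomorphism, i.e., $\varphi([\eta_1,\eta_2]) = [\varphi(\eta_1), \varphi(\eta_2)]$ for $\eta_1, \eta_2 \in \Vc$; and (iii) the cross relation of the smash product is preserved, namely $\varphi(1\otimes\eta)\varphi(f\otimes 1) - \varphi(f\otimes 1)\varphi(1\otimes\eta) = \varphi(\eta(f) \otimes 1)$ for $f \in \Ac$, $\eta \in \Vc$, which by the formula in Section 2 is the statement that $[\varphi(\eta), \varphi(f)] = \eta(f)$ inside $\Dc \otimes U(\Lp)$.

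For step (iii), I would write $\eta = x^k \dd{x_p}$ and $f = x^l$. On the $\Dc \otimes 1$ part of $\varphi(\eta)$ we get $[x^k\partial_p, x^l] = l_p x^{k+l-\eps_p}$ acting in the Weyl algebra, which is exactly $\varphi(\eta(f))$; the remaining cross-terms involve $[\,\sum_{0<m\le k}\binom{k}{m} x^{k-m}\otimes x^m\dd{x_p}\, ,\, x^l \otimes 1\,]$, and since the $U(\Lp)$-factor $x^m\dd{x_p}$ commutes with $x^l\otimes 1$ while $x^{k-m}$ and $x^l$ commute in $\Dc$, these cross-terms vanish. So (iii) reduces to the elementary Weyl-algebra identity. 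Step (ii) is the substantive one: expanding $[\varphi(x^k\dd{x_p}), \varphi(x^l\dd{x_q})]$ produces four groups of terms --- (Weyl, Weyl), (Weyl, $U(\Lp)$), ($U(\Lp)$, Weyl), and ($U(\Lp)$, $U(\Lp)$) --- and one must match this against $\varphi$ applied to $[x^k\dd{x_p}, x^l\dd{x_q}] = l_p\, x^{k+l-\eps_p}\dd{x_q} - k_q\, x^{k+l-\eps_q}\dd{x_p}$.

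The main obstacle will be the bookkeeping in step (ii): the $U(\Lp)$ factors $x^m\dd{x_p}$ and $x^j\dd{x_q}$ do not commute, so their bracket contributes, and the mixed Weyl/$U(\Lp)$ brackets contribute differentiation terms like $\dd{x_q}$ hitting $x^{k-m}$. I would organize the computation so that the purely combinatorial cancellations are exactly the identities proved in Lemma~\ref{comb}: part (b) is tailored to handle the sum over the mixed and $U(\Lp)$-internal cross terms (note the appearance of $\binom{k}{m}\binom{l}{j} j_p$ and the telescoping on the right-hand side), while part (a) handles the boundary corrections when the index ranges $0 < m \le k$ are extended to $0 \le m \le k$. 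After substituting, the left-hand side of the desired identity should collapse, via Lemma~\ref{comb}(b), precisely onto the $U(\Lp)$-component of $\varphi(l_p x^{k+l-\eps_p}\dd{x_q} - k_q x^{k+l-\eps_q}\dd{x_p})$, with the Weyl-algebra component matching separately and more easily. Thus the proof is: reduce to (i)--(iii), dispatch (i) and (iii) directly, and for (ii) expand, regroup the four term-types, and invoke Lemma~\ref{comb} to finish.
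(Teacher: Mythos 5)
Your proposal is correct and follows essentially the same route as the paper: reduce to the three checks (homomorphism on $\Ac$, preservation of the bracket on $\Vc$, and the smash-product cross relation), observe that the cross relation reduces to the Weyl-algebra identity because the mixed terms commute away, and settle the bracket computation by grouping the four term-types and invoking Lemma~\ref{comb}(b). The only cosmetic difference is that the paper does not actually need Lemma~\ref{comb}(a) here (it is used only in the proof of Lemma~\ref{psi}), since part (b) is already stated for the ranges $0<m\leq k$, $0<j\leq l$ that arise.
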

\begin{proof}
Clearly, the natural embedding $\Ac \hookrightarrow \Dc$ is a homomorphism.
To show that $\varphi_{|_{\Vc}}$ extends to a homomorphism of $\Uc (\Vc)$, we
need to check that
\begin{equation}\label{L1}
\left[\varphi\left(x^k\dx{p}\right),\varphi\left(x^l\dx{q}\right)\right]=\varphi\left(\left[x^k\dx{p},x^l\dx{q}\right]\right).
\end{equation}
The left hand side becomes
 \begin{align*}
&\left[x^k\frac{\partial}{\partial x_p},x^l\frac{\partial}{\partial x_q}\right]\otimes 1
+\sum_{0<j\leq l}{l\choose j}\left[x^k\frac{\partial}{\partial x_p},x^{l-j}\right]\otimes x^j\frac{\partial}{\partial x_q}\\
&-\sum_{0<m\leq k}{k\choose m}\left[x^l\frac{\partial}{\partial x_q},
x^{k-m}\right]\otimes x^m\frac{\partial}{\partial x_p}\\
+&\sum_{\substack{0<m\leq k\\0<j\leq l}}{k\choose m}{l\choose j}x^{k+l-m-j}\otimes\left[x^m\frac{\partial}{\partial x_p},x^j\frac{\partial}{\partial x_q}\right].
\end{align*}
We can see that 
$\left[x^k\frac{\partial}{\partial x_p},x^l\frac{\partial}{\partial x_q}\right]\otimes 1$
matches the first term of the right hand side of (\ref{L1}).  
Just as the right hand side of  (\ref{L1}), the remaining summands have terms with 
$\dd{x_q}$ and $\dd{x_p}$. Because of symmetry, it is sufficient to verify the equality 
for the terms with $\dd{x_q}$:
\begin{align*}
&\sum_{0<j\leq l}{l\choose j} (l_p - j_p) x^{k+l-j-\eps_p}\otimes x^j\frac{\partial}{\partial x_q}\\
+&\sum_{\substack{0<m\leq k\\0<j\leq l}}{k\choose m}{l\choose j}x^{k+l-m-j}\otimes j_p x^{m+j-\eps_p} \frac{\partial}{\partial x_q}
\end{align*}
Note that
$${l \choose j} (l_p - j_p) = l_p {l - \eps_p \choose j},$$
while the last term may be simplified using Lemma~\ref{comb} (b) and we obtain
\begin{align*}
&l_p \sum_{0<j\leq l}{l - \eps_p \choose j} x^{k+l-j-\eps_p}\otimes x^j\frac{\partial}{\partial x_q}\\
+ &l_p \sum_{0 < j \leq k +l - \eps_p} {k + l - \eps_p \choose j} 
x^{k+l-j-\eps_p} \otimes x^j \dd{x_q} \\
- &l_p \sum_{0 < j \leq l - \eps_p} {l - \eps_p \choose j} 
x^{k+l-j-\eps_p} \otimes x^j \dd{x_q}.
\end{align*}
The first and the last terms cancel and we end up with exactly what we need, establishing 
(\ref{L1}).

Finally, we need to verify that the commutation relation between vector fields and functions, which defines the smash product, is preserved:
\begin{equation}
\label{L2}
\left[\varphi\left(x^k\dx{p}\right),\varphi\left(x^l\right)\right]=\varphi\left(\left[x^k\dx{p},x^l\right]\right).
\end{equation}
Indeed,
\begin{align*}
&\left[\varphi\left(x^k\dx{p}\right),\varphi\left(x^l\right)\right]=\\
&\left[ x^k\dx{p} \otimes 1  
+\sum_{0<m\leq k}{k\choose m}x^{k-m}\otimes x^m\frac{\partial}{\partial x_p},
x^l \otimes 1 \right] =\\
&\left[ x^k\dx{p}, x^l \right] \otimes 1 =
\varphi\left(\left[x^k\dx{p},x^l\right]\right).
\end{align*} 
This completes the proof of Lemma~\ref{phi}.
\end{proof}

\begin{lemma}
\label{psi}
The map $\psi:\, \Ds\otimes \Uc(\Lc_+) \rightarrow \smash$ 
given in Theorem~\ref{iso} is a homomorphism of associative algebras.
\end{lemma}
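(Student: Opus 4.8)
The plan is to verify that $\psi$ respects all the defining relations of $\Ds \otimes \Uc(\Lc_+)$, which amounts to three things: that $\psi_{|_\Ds}$ is a homomorphism of the Weyl algebra, that $\psi_{|_{\Lc_+}}$ extends to a homomorphism of $\Uc(\Lc_+)$, and that the images of $\Ds$ and $\Lc_+$ commute inside $\smash$. The cleanest route, however, is to avoid re-proving that $\psi$ is a homomorphism from scratch: instead I would show directly that $\psi$ and $\varphi$ are mutually inverse \emph{linear} maps. Since $\varphi$ is already known to be an algebra homomorphism by Lemma~\ref{phi}, once $\psi = \varphi^{-1}$ as linear maps it follows automatically that $\psi$ is an algebra homomorphism. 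So the real content of this lemma is a bijectivity/inverse computation rather than a fresh relation-check.

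First I would record that $\varphi$ is surjective: the formula for $\varphi_{|_\Vc}$ shows that $\varphi(x^k \tfrac{\partial}{\partial x_p})$ equals $x^k\tfrac{\partial}{\partial x_p} \otimes 1$ plus lower-order-in-$\Ds$ correction terms, so modulo the filtration by order of differential operators the leading term is $x^k\tfrac{\partial}{\partial x_p}\otimes 1$; combined with $\varphi$ being the identity on $\Ac$, an induction on differential-operator order shows every $x^r\partial^s \otimes u$ with $u \in \Uc(\Lc_+)$ is in the image. Then I would check $\varphi \circ \psi = \id$ on the two types of generators of $\Ds \otimes \Uc(\Lc_+)$. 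On $\Ds$: $\varphi(\psi(x^r\partial^s)) = \varphi(x^r) \varphi(\tfrac{\partial}{\partial x_1})^{s_1}\cdots\varphi(\tfrac{\partial}{\partial x_n})^{s_n}$, and since $\varphi(\tfrac{\partial}{\partial x_i}) = \tfrac{\partial}{\partial x_i}\otimes 1$ (the sum $\sum_{0<m\le \eps_i}$ with $k = \eps_i$ contributes $\binom{\eps_i}{\eps_i} x^0 \otimes \tfrac{\partial}{\partial x_i}$—wait, this needs care: with $k=\eps_i$ the correction term is $\binom{\eps_i}{\eps_i} x^{0}\otimes x^{\eps_i}\tfrac{\partial}{\partial x_i} = 1 \otimes x_i\tfrac{\partial}{\partial x_i}$, which is nonzero), so I would instead verify $\varphi\circ\psi = \id$ by pairing it with the already-established $\varphi\circ\varphi^{-1}$ structure, or more straightforwardly show $\psi \circ \varphi = \id$ on $\Ac$ and on $\Vc$, using Lemma~\ref{comb}(a) for the telescoping. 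Concretely, $\psi(\varphi(x^k\tfrac{\partial}{\partial x_p})) = \psi(x^k\tfrac{\partial}{\partial x_p}\otimes 1) + \sum_{0<m\le k}\binom{k}{m} x^{k-m}\,\psi_{|_{\Lc_+}}(x^m\tfrac{\partial}{\partial x_p})$, and expanding $\psi_{|_{\Lc_+}}$ and collecting the coefficient of each $x^{k-j}\# x^j\tfrac{\partial}{\partial x_p}$ reduces, via the Vandermonde/alternating-sum identity in Lemma~\ref{comb}(a), to the single term $x^k\# \tfrac{\partial}{\partial x_p}$, i.e. to $\varphi^{-1}$ of the original—this confirms $\psi$ is a left inverse of $\varphi$ on generators, hence on all of $\smash$ by multiplicativity of $\varphi$.

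Having shown $\psi \circ \varphi = \id_{\smash}$ and that $\varphi$ is surjective (hence bijective, with $\psi$ two-sided inverse), the map $\psi = \varphi^{-1}$ is an algebra homomorphism, which is the assertion of Lemma~\ref{psi}. The main obstacle I anticipate is the bookkeeping in the telescoping sums: one must keep careful track of the multi-index binomial coefficients and the shift by $\eps_p$, and show the relevant alternating sums collapse. Lemma~\ref{comb}(a) and (c) are precisely tailored to kill these sums—part (a) handles the $\psi\circ\varphi$ direction on $\Vc$, and part (c) (the alternating-sign analogue of (b)) would be needed if one instead checks $\varphi\circ\psi = \id$ directly on $\Lc_+$ generators or checks that $\psi(\Ds)$ and $\psi(\Lc_+)$ commute. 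A secondary subtlety is that $\Uc(\Lc_+)$ is not freely generated, so when verifying $\psi_{|_{\Lc_+}}$ extends to $\Uc(\Lc_+)$ (if one takes the direct route rather than the inverse-map route) one must check the Lie bracket relations $[\psi(x^m\tfrac{\partial}{\partial x_p}), \psi(x^l\tfrac{\partial}{\partial x_q})] = \psi([x^m\tfrac{\partial}{\partial x_p}, x^l\tfrac{\partial}{\partial x_q}])$ inside $\smash$—another application of Lemma~\ref{comb}. But routing everything through ``$\psi = \varphi^{-1}$'' sidesteps this entirely, so that is the approach I would take.
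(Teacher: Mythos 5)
Your reduction ``once $\psi=\varphi^{-1}$ as linear maps, $\psi$ is automatically a homomorphism'' is sound as a general principle, but the way you propose to establish $\psi=\varphi^{-1}$ is circular. The map $\psi$ is only \emph{defined} on generators (equivalently, on a PBW-type basis of $\Ds\otimes\Uc(\Lp)$ by multiplying together the images of the generators), so what your computation actually verifies is that $\psi\circ\varphi=\id$ on the \emph{generators} of $\smash$. To promote this to $\psi\circ\varphi=\id$ on all of $\smash$ you invoke ``multiplicativity of $\varphi$,'' but that is not enough: for generators $a,b$ one has $\psi(\varphi(ab))=\psi(\varphi(a)\varphi(b))$, and to identify this with $\psi(\varphi(a))\,\psi(\varphi(b))=ab$ you need $\psi$ to be multiplicative --- which is exactly the assertion of the lemma. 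The same circularity blocks deducing injectivity of $\varphi$ from the generator computation. This is precisely why the paper proves Lemma~\ref{psi} by a direct relation check --- that $\psi$ preserves the brackets of $\Lp$ (via Lemma~\ref{comb}(a),(c)), the Weyl-algebra relations, and the commutativity of the two tensor factors --- and only \emph{afterwards}, in Lemma~\ref{inv}, argues that ``since these compositions are endomorphisms of associative algebras, it suffices to check them on generators.''

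The argument could be repaired, but not along the line you sketch: you would need (i) an independent proof that $\varphi$ is bijective (your surjectivity sketch via a filtration is plausible but needs care, since the correction terms $x^{k-m}\otimes x^m\dd{x_p}$ sit in the same degree as the leading term for the natural gradings, and injectivity needs its own argument), and then (ii) a check that $\varphi\circ\psi=\id$ on the generators of $\Ds\otimes\Uc(\Lp)$ --- not of $\smash$ --- so that $\psi$ agrees with the homomorphism $\varphi^{-1}$ on generators and hence, by the very definition of $\psi$ on basis monomials as products of generator images, on all of $\Ds\otimes\Uc(\Lp)$. Step (ii) is essentially the second half of the paper's Lemma~\ref{inv}, and step (i) is extra work the paper never needs, so the shortcut does not in fact avoid the combinatorial identities of Lemma~\ref{comb}. (A minor side point: your worry about $\varphi(\dd{x_i})$ is unfounded --- there $k=0$, the correction sum is empty, $\varphi(1\#\dd{x_i})=\dd{x_i}\otimes 1$, and $\varphi\circ\psi=\id$ on $\Ds$ is immediate; your confusion came from substituting $k=\eps_i$, which corresponds to $x_i\dd{x_i}$, not to $\dd{x_i}$.)
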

\begin{proof}
We need to show that $\psi$ preserves the defining relations of $U(\Lc_+)$, $\Ds$ and
commutativity between $U(\Lc_+)$ and $\Ds$. Let us check that
\begin{equation}\label{L3}
\left[\psi\left(x^k\dx{p}\right),\psi\left(x^l\dx{q}\right)\right]=\psi\left(\left[x^k\dx{p},x^l\dx{q}\right]\right),
\end{equation}
where $k, l \neq 0$.
The left hand side becomes:
\begin{align*}
&\left[ \sum_{0\leq m \leq k} (-1)^{k-m} {k \choose m} x^{k-m} \# x^m \dd{x_p},
\sum_{0\leq j \leq l} (-1)^{l-j} {l \choose j} x^{l-j} \# x^j \dd{x_q} \right] =\\
&\sum_{\substack{0\leq m\leq k\\0\leq j\leq l}} (-1)^{k+l-m-j} {k \choose m} {l \choose j}
x^{k-m} \left[ x^m \dd{x_p}, x^{l-j} \right] \# x^j \dd{x_q} \\
&- \sum_{\substack{0\leq m\leq k\\0\leq j\leq l}} (-1)^{k+l-m-j} {k \choose m} {l \choose j}
x^{l-j} \left[ x^j \dd{x_q}, x^{k-m} \right] \# x^m \dd{x_p} \\
&+ \sum_{\substack{0\leq m\leq k\\0 \leq j\leq l}} (-1)^{k+l-m-j} {k \choose m} {l \choose j}
x^{k+l-m-j} \# \left[ x^m \dd{x_p}, x^j \dd{x_q} \right] .
\end{align*}
Just as we did in the proof of Lemma~\ref{phi}, let us keep track of the terms with $\dd{x_q}$, the terms with $\dd{x_p}$ can be then obtained by symmetry. Such terms will be
\begin{align*}
&\sum_{\substack{0\leq m\leq k\\0\leq j\leq l}} (-1)^{k+l-m-j} {k \choose m} {l \choose j}
(l_p - j_p) x^{k+l-j-\eps_p} \# x^j \dd{x_q} \\
&+ \sum_{\substack{0\leq m\leq k\\0\leq j\leq l}} (-1)^{k+l-m-j} {k \choose m} {l \choose j}
x^{k+l-m-j} \# j_p x^{m+j - \eps_p} \dd{x_q}  .
\end{align*}
In the first term above we can carry out summation in $m$, and we see that this term 
vanishes by Lemma~\ref{comb} (a), since $k \neq 0$. The second term may be evaluated 
using Lemma~\ref{comb} (c), yielding
\begin{align*}
&l_p \sum_{0 \leq j \leq k +l - \eps_p} (-1)^{k+l-j-\eps_p} {k + l - \eps_p \choose j} 
x^{k+l-j-\eps_p} \# x^j \dd{x_q} \\
& = l_p \psi \left( x^{k+l-\eps_p} \dd{x_p} \right),
\end{align*}
which is exactly what we need.

Next, let us verify that $\psi$ preserves the relations in $\Ds$. The defining relations of the Weyl algebra are: $[x_p , x_q] = 0$, $\left[ \dd{x_p}, \dd{x_q} \right] = 0$,
$\left[ \dd{x_p}, x_q \right] = \delta_{p,q}$. The fact that $\psi$ preserves the first two relations is a trivial consequence of the definition of $\psi$. The last relation follows 
immediately from the properties of the smash product.

 Finally, we need to check that $\psi(\Ds)$ and $\psi(\Uc (\Lc_+))$ commute. It is sufficient to establish commutativity for the generators of these associative algebras.

Consider
\begin{align*}
&\left[ \psi \left( \dd{x_q} \otimes 1 \right), \psi \left( 1 \otimes x^k \dd{x_p} \right) \right] =\\
&\left[ 1 \# \dd{x_q}, \sum_{0 \leq m \leq k} (-1)^m {k \choose m} x^{k-m} \#
x^m \dd{x_p} \right] = \\
&\sum_{0 \leq m \leq k} (-1)^m {k \choose m} \left[ \dd{x_q}, x^{k-m} \right]
\# x^m \dd{x_p} \\
&+  \sum_{0 \leq m \leq k} (-1)^m {k \choose m} x^{k-m} \#  \left[ \dd{x_q}, 
x^m \dd{x_p} \right] = \\
&\sum_{0 \leq m \leq k} (-1)^m {k \choose m} (k_q - m_q) x^{k-m-\eps_q}
\# x^m \dd{x_p} \\
&+  \sum_{0 \leq m \leq k} (-1)^m {k \choose m} x^{k-m} \#  m_q  x^{m-\eps_q} \dd{x_p} =\\
&k_q  \sum_{0 \leq m \leq k} (-1)^m {k - \eps_q \choose m} x^{k-m-\eps_q}
\# x^m \dd{x_p} \\
&+  k_q \sum_{0 \leq m \leq k} (-1)^m {k - \eps_q \choose m - \eps_q} x^{k-m} \#  m_q  x^{m-\eps_q} \dd{x_p} .\\
\end{align*}
Shifting the summation index $m$ by $\eps_q$ in the last term, we see that the two terms cancel out and the result is zero.

For the generator $x_q$ of $\Ds$ we get
\begin{align*}
&\left[ \psi(x_q \otimes 1), \psi\left(1 \otimes x^k \dd{x_p} \right) \right] = \\
& \left[ x_q \# 1,  \sum_{0 \leq m \leq k} (-1)^m {k \choose m} x^{k-m} \#
x^m \dd{x_p} \right] = \\
& -  \sum_{0 \leq m \leq k} (-1)^m {k \choose m} x^{k-m} 
\left[ x^m \dd{x_p}, x_q \right] \# 1 = \\
&- \delta_{p,q} \sum_{0 \leq m \leq k} (-1)^m {k \choose m} x^k \# 1 = 0,
\end{align*}
since $k \neq 0$. This completes the proof of Lemma~\ref{psi}.
\end{proof}

\begin{lemma}
\label{inv}
The homomorphisms $\varphi$ and $\psi$ given in Theorem~\ref{iso} are inverses of
each other.
\end{lemma}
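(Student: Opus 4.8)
The plan is to establish that $\psi \circ \varphi = \id$ and $\varphi \circ \psi = \id$ by checking these identities on algebra generators, using that both $\varphi$ and $\psi$ are already known to be homomorphisms (Lemmas~\ref{phi} and~\ref{psi}). Since $\smash$ is generated by $\Ac$ together with $\Vc$, and $\Ds \otimes \Uc(\Lc_+)$ is generated by the Weyl-algebra generators $x_q, \dd{x_q}$ together with $\Lc_+$, it suffices to verify the composite maps act as the identity on these four families of generators. Concretely I would first observe that $\varphi$ restricted to $\Ac$ is the natural embedding and $\psi_{|_\Ds}$ sends $x^r \partial^s$ to $x^r \# \partial^s$, so $\psi \circ \varphi$ is clearly the identity on $\Ac \subset \smash$; dually $\varphi \circ \psi$ is the identity on $\Ds$.

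Next I would compute $\psi\bigl(\varphi(x^k \dd{x_p})\bigr)$ for a general monomial vector field. Applying $\psi$ term by term to
$$\varphi\left(x^k\dd{x_p}\right) = x^k\dd{x_p}\otimes 1 + \sum_{0<m\leq k}{k\choose m}x^{k-m}\otimes x^m\dd{x_p},$$
and using $\psi_{|_\Ds}(x^k\partial_p) = x^k\# \dd{x_p}$ together with the expansion of $\psi_{|_{\Lc_+}}(x^m\dd{x_p})$, one collects a double sum in which the inner sum over a fixed total exponent can be evaluated by the binomial identity of Lemma~\ref{comb}(a). The alternating-sign cancellations should leave exactly $x^k \# \dd{x_p} = 1\otimes x^k\dd{x_p}$ as the only surviving term, confirming $\psi\circ\varphi = \id$ on $\Vc$. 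The reverse direction, $\varphi\bigl(\psi(x^m\dd{x_p})\bigr)$, is handled symmetrically: substitute the formula for $\psi_{|_{\Lc_+}}$, apply $\varphi$ (which is the natural embedding on the $\Ac$-factors and the already-computed formula on the $\Vc$-factors), and again use Lemma~\ref{comb}(a) to see that all but the leading term cancel, giving back $1\otimes x^m\dd{x_p}$. Finally one checks $\psi(\varphi(x_q)) = x_q$ and $\varphi(\psi(x_q\otimes 1)) = x_q\otimes 1$, $\varphi(\psi(\dd{x_q}\otimes 1)) = \dd{x_q}\otimes 1$, which are immediate.

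The main obstacle I anticipate is purely bookkeeping: correctly organizing the nested binomial sums that arise when composing the two maps on $\Vc$ (resp. $\Lc_+$), so that the telescoping/cancellation via Lemma~\ref{comb}(a) is transparent rather than obscured by index shifts. Once the generating sets are identified and the homomorphism property is invoked, there is no conceptual difficulty — the whole argument reduces to the observation that $\psi\circ\varphi$ and $\varphi\circ\psi$ are algebra endomorphisms that fix a generating set, hence equal the identity. A minor point to be careful about is that $\psi_{|_{\Lc_+}}$ is only defined for $m\neq 0$ (i.e. on $\Vc_0\oplus\Vc_1\oplus\cdots$), so the verification on $\Lc_+$-generators must restrict to $k,l\neq 0$, exactly as in the statements of Lemmas~\ref{phi} and~\ref{psi}; the constant vector fields $\dd{x_p}$ are handled through $\Ds$, not through $\Lc_+$.
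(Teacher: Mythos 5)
Your proposal is correct and follows essentially the same route as the paper: both compositions are algebra endomorphisms, so it suffices to check them on generators, where the identity on $\Ac$ and $\Ds$ is immediate and the identity on $\Vc$ and $\Lc_+$ follows from reorganizing the nested binomial sums and collapsing the inner alternating sum via Lemma~\ref{comb}(a). No substantive difference from the paper's argument.
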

\begin{proof}
We need to show that $\psi \circ \varphi$ and $\varphi \circ \psi$ are identity maps. 
Since these compositions are endomorphisms of associative algebras, it is sufficient to
check that each is an identity on the generators. 

The composition $\psi \circ \varphi$ is identity on $\Ac$ immediately from the definition.
Let us check that it is also identity on the elements of $\Vc$:
\begin{align*}
&\psi \left( \varphi \left( 1 \# x^k \dd{x_p} \right) \right) = \\
&\psi \left( x^k \dd{x_p} \otimes 1 \right) 
+ \sum_{0 < m \leq k} {k \choose m} \psi \left( x^{k-m} \otimes x^m \dd{x_p} \right) = \\
&x^k \# \dd{x_p} + 
\sum_{0 < m \leq k} {k \choose m}
\sum_{0 \leq j \leq m} (-1)^{m-j} {m \choose j} x^{k-m} x^{m-j} \# x^j \dd{x_p} = \\
&\sum_{0 \leq m \leq k} \sum_{0 \leq j \leq m}  (-1)^{m-j} {k \choose m}
{m \choose j} x^{k-j} \#  x^j \dd{x_p} = \\
& \sum_{0 \leq j \leq k} {k \choose j} \left( \sum_{j \leq m \leq k} (-1)^{m-j}
{k-j \choose m-j} \right)  x^{k-j} \#  x^j \dd{x_p} = \\
& \sum_{0 \leq j \leq k} {k \choose j} \delta_{j,k} x^{k-j} \#  x^j \dd{x_p} =
1 \# x^k \dd{x_p} .
\end{align*} 
The composition $\varphi \circ \psi$ is trivially identity on $\Ds$. Let us compute its 
value on $\Lc_+$:
\begin{align*}
&\varphi \left( \psi \left( 1 \otimes x^k \dd{x_p} \right) \right) = \\
&\sum_{0 \leq m \leq k} (-1)^{k-m} {k \choose m} \varphi \left( x^{k-m} \# x^m \dd{x_p} \right) = \\
&\sum_{0 \leq m \leq k} (-1)^{k-m} {k \choose m} (x^{k-m} \otimes 1)
\left( x^m \dd{x_p} \otimes 1 
+ \sum_{0 < j \leq m} {m \choose j} x^{m-j} \otimes x^j \dd{x_p} \right) = \\
&\sum_{0 \leq m \leq k} (-1)^{k-m} {k \choose m} x^k \dd{x_p} \otimes 1 \\
&+ \sum_{0 \leq m \leq k}  \sum_{0 < j \leq m} (-1)^{k-m} {k \choose m} {m \choose j}
x^{k-j} \otimes x^j \dd{x_p}.
\end{align*}
The first summand in the last equality vanishes, while in the second we switch the order
of summation:
\begin{align*}
&\sum_{0 < j \leq k} {k \choose j} \left( \sum_{j \leq m \leq k} (-1)^{k-m} {k-j \choose m-j} \right)
x^{k-j} \otimes x^j \dd{x_p} = \\
&\sum_{0 < j \leq k} {k \choose j} \delta_{j,k} x^{k-j} \otimes x^j \dd{x_p} = 
1 \otimes x^k \dd{x_p} .
\end{align*}
\end{proof}

Combining Lemmas~\ref{phi} -~\ref{inv}, we establish Theorem~\ref{iso}.

For any affine variety $X$ every $\Ds$ module is an $\AV$ module, but the converse is not necessarily true. However, Theorem~\ref{iso} shows that when $X$ is an affine space, every $\AV$ module has a $\Ds$ module structure, but with an additional commuting action of $\Lc_+$. It can be seen that in this case $\Ds$ modules are precisely
$\AV$ modules for which the action of $\Lc_+$ is zero.

\end{section}
\begin{section}{$\Ac\Vc$ modules that are finitely generated $\Ac$ modules}
  Let $\kf$ be an algebraically closed field of characteristic zero.  Let $X$ be an irreducible nonsingular affine algebraic variety over $\kf$.
  We let $\Ac =\Oc(X)$ be the algebra of regular functions on $X$, and $\Vc = \Der_\kf(A)$ the Lie algebra of vector fields on $X$.
  We recall the following two well known facts. 
  \begin{theorem} \label{wellknown} Let $\kf$ be an algebraically closed field of characteristic zero and let $X$ be a smooth irreducible variety over $\kf$ with $\Ac =\Oc(X)$
    and $\Vc = \Der_\kf(X)$, then
          \begin{enumerate}
  \item $\Dc(X)$ is generated by $\Ac$ and $\Vc$.
    \item There is a surjective algebra homomorphism $\Ac  \# U(\Vc) \rightarrow \Dc(X).$
    \item $\Ac$ is a simple $\Dc(X)$ module.
    \item \label{AisAVsimple} $\Ac$ is a simple $\Ac \# U(\Vc)$ module. 
  \end{enumerate}
  \end{theorem}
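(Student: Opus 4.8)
My plan is to prove the four statements in the order listed, using each to shorten the next and appealing to the standard structure theory of rings of differential operators on smooth affine varieties (as in the usual references on $\Dc$-modules and \cite{M} for the smash product); since none of this is new I will keep the arguments brief. For (1), I would induct on the order of $P\in\Dc(X)$. Because $X$ is smooth affine, $\Der_\kf(\Ac)=\HOM_\Ac(\Omega^1_{\Ac/\kf},\Ac)$ is a finitely generated projective $\Ac$-module and the associated graded ring of $\Dc(X)$ for the order filtration is the symmetric algebra $\mathrm{Sym}_\Ac\bigl(\Der_\kf(\Ac)\bigr)$. Hence the principal symbol $\sigma_k(P)$ of an operator of order $k$ can be written as $\sum_\alpha f_\alpha\,\eta_{\alpha,1}\cdots\eta_{\alpha,k}$ with $f_\alpha\in\Ac$ and $\eta_{\alpha,i}\in\Vc$; the corresponding composite operator $Q=\sum_\alpha f_\alpha\,\eta_{\alpha,1}\cdots\eta_{\alpha,k}$ lies in the subalgebra of $\End_\kf(\Ac)$ generated by $\Ac$ and $\Vc$ and has the same symbol, so $P-Q$ has order $<k$, and induction (base case: order-zero operators are multiplications) finishes. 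Smoothness is exactly what makes this argument go through.

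For (2), observe that $\Ac\#U(\Vc)$ is generated as an associative algebra by $\Ac\otimes 1$ and $1\otimes\Vc$, so there is at most one algebra homomorphism to $\End_\kf(\Ac)$ sending $f\otimes 1$ to multiplication by $f$ and $1\otimes\eta$ to the derivation $\eta$. To see it is well defined I would check it respects the two families of defining relations of the smash product: the relations inside $U(\Vc)$ hold because $\Vc\to\End_\kf(\Ac)$ is a homomorphism of Lie algebras, and the straightening relation $(1\otimes\eta)(f\otimes 1)=f\otimes\eta+\eta(f)\otimes 1$ becomes the Leibniz identity $\eta\circ f=f\circ\eta+\eta(f)$ of operators on $\Ac$. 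The image of this map is the subalgebra generated by $\Ac$ and $\Vc$, which equals $\Dc(X)$ by (1); this is the required surjection.

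For (3), let $N\subseteq\Ac$ be a nonzero $\Dc(X)$-submodule. Since $\Ac\subseteq\Dc(X)$ acts by multiplication, $N$ is an ideal of $\Ac$, and since $\Vc\subseteq\Dc(X)$, $N$ is stable under every derivation of $\Ac$. I would then use the characteristic-zero fact that a nonzero derivation-stable ideal of a smooth affine domain is the unit ideal: by Seidenberg's theorem the radical of a $\Der$-stable ideal is again $\Der$-stable, hence its minimal primes are $\Der$-stable; but a $\Der$-stable prime $\pf\neq 0$ would define a proper closed subvariety $V(\pf)\subsetneq X$ to which every polynomial vector field is tangent, which is impossible at a point smooth on both $V(\pf)$ and $X$, where $\Der_\kf(\Ac)$ spans the entire tangent space of $X$. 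Thus $N$ is contained in no prime ideal, so $N=\Ac$. (Alternatively one may simply cite the well-known fact that $\Oc_X$ is a simple $\Dc_X$-module when $X$ is smooth and irreducible.)

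Finally, (4) is formal: the $\Ac\#U(\Vc)$-module structure on $\Ac$ is precisely the pullback, along the surjection of (2), of its $\Dc(X)$-module structure, so every $\Ac\#U(\Vc)$-submodule of $\Ac$ is a $\Dc(X)$-submodule, and simplicity follows from (3). I expect (3) to be the only step with any real content: it rests on Seidenberg's theorem on differentiation of radicals together with the fact that vector fields span the tangent space at every point of a smooth variety, whereas (1) is a routine symbol computation and (2), (4) are essentially diagram chases.
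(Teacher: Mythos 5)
Your proposal is correct, and parts (2) and (4) coincide with the paper's treatment: the paper likewise derives the surjection from (1) by noting that the smash product relations are realized by multiplication operators and derivations, and deduces (4) by pulling submodules back along that surjection. The differences are in (1) and (3). For (1) the paper simply cites Proposition 5.3 of Gaitsgory's notes, whereas you reproduce the standard proof behind that citation (smoothness gives $\operatorname{gr}\Dc(X)\cong\mathrm{Sym}_\Ac(\Der_\kf(\Ac))$, then induct on order by subtracting an operator with the same principal symbol); this is the expected argument and is fine, though you are implicitly relying on the nontrivial fact that the symbol map from the symmetric algebra is an isomorphism, which is essentially equivalent to the statement being proved and is where smoothness enters. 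For (3) you take a genuinely different route: the paper argues pointwise and elementarily --- given a nonzero $f$ in a $\Dc$-submodule $N$ and a point $P\in X$, apply a suitable sequence of derivations to produce an element of $N$ not vanishing at $P$, and conclude $V(N)=\emptyset$ by the Nullstellensatz --- while you pass through Seidenberg's theorem to replace $N$ by a derivation-stable prime $\pf$ and then derive a contradiction from the fact that $\Der_\kf(\Ac)$ surjects onto the tangent space at a point smooth on both $V(\pf)$ and $X$. Both arguments are valid in characteristic zero; the paper's is more self-contained and quantitative (it exhibits the element of $N$ explicitly), while yours is more structural and imports Seidenberg's radical/minimal-prime stability, plus the generic smoothness of $V(\pf)$, as external inputs. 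Either way the key geometric fact is the same: on a smooth variety the global vector fields span every tangent space.
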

  \begin{proof}
    For (1) see Proposition 5.3 in~\cite{G}, and (2) follows from (1). To establish (3), we note that every $\Ds$-submodule in $\Ac$ is an ideal. Given a non-zero function in a
$\Ds$-submodule and a point $P \in X$, we can apply a sequence of derivations to obtain a function in the same $\Ds$-submodule, which does not vanish at $P$ (see e.g.~\cite{BF} for details). By Hilbert's Nullstellensatz, every non-zero $\Ds$-submodule in $\Ac$ coincides with $\Ac$. Finally, (4) follows from (3).
    \end{proof}
 For a left $\Ac$ module, recall the torsion submodule is defined by
 $$\Tor_{\Ac}(M) = \{ m \in M \, | \, \exists 
%~~\mbox{s.th~ there exists a non-zero divisor} 
f \in \Ac, f \neq 0, ~\text{with}~ fm =0 \}.$$
 
  \begin{lemma}\label{Tor of Av mod}  Let $\kf$ be an algebraically closed field and let $X$ be an irreducible smooth affine variety over $\kf$.  
  Let $M$ be an $\Ac\Vc$ module that is finitely generated as an $\Ac$ module.
  Then $\Tor_{\Ac}(M)=0$.
\end{lemma}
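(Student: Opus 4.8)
The plan is to prove that the torsion submodule $T:=\Tor_{\Ac}(M)$ is actually an $\Ac\Vc$-submodule, to extract from it a nonzero $\Vc$-stable ideal of $\Ac$, and then to use the simplicity of $\Ac$ as an $\Ac\# U(\Vc)$-module from Theorem~\ref{wellknown} to force $T=0$. The first step is to check that $T$ is stable under $\Vc$. If $m\in T$, say $fm=0$ with $0\neq f\in\Ac$, and $\eta\in\Vc$, then the Leibniz rule gives $0=\eta(fm)=f(\eta m)+\eta(f)m$, hence $f(\eta m)=-\eta(f)m$ and therefore $f^2(\eta m)=-\eta(f)(fm)=0$. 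Since $X$ is irreducible, $\Ac$ is an integral domain, so $f^2\neq 0$ and $\eta m\in T$; thus $T$ is an $\Ac\Vc$-submodule of $M$.

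Next, because $X$ is affine the ring $\Ac$ is Noetherian, so the submodule $T$ of the finitely generated module $M$ is itself finitely generated, say by $m_1,\dots,m_r$. Picking $0\neq f_i\in\Ac$ with $f_im_i=0$ and setting $f:=f_1\cdots f_r$, which is nonzero as $\Ac$ is a domain, we get $fT=0$, so the annihilator $I$ of $T$ in $\Ac$ is a \emph{nonzero} ideal. I then claim $I$ is $\Vc$-stable: if $f\in I$ and $\eta\in\Vc$, then for every $m\in T$ we have $0=\eta(fm)=f(\eta m)+\eta(f)m$, and since $\eta m\in T$ by the previous paragraph, $f(\eta m)=0$, whence $\eta(f)m=0$; as $m$ was arbitrary, $\eta(f)\in I$. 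So $I$ is an ideal of $\Ac$ stable under $\Vc$, i.e.\ a nonzero $\Ac\Vc$-submodule of $\Ac$ (equivalently, by Theorem~\ref{wellknown}(1), a $\Ds(X)$-submodule).

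Finally, by Theorem~\ref{wellknown}(\ref{AisAVsimple}) the module $\Ac$ is simple over $\Ac\# U(\Vc)$, so the nonzero submodule $I$ must equal $\Ac$; in particular $1\in I$, and therefore $T=1\cdot T=0$, as required. I do not expect a real obstacle here: the only step that needs genuine care is the closure of $T$ (and likewise of $I$) under the $\Vc$-action, where one cannot read off $\eta m\in T$ directly from $f(\eta m)=-\eta(f)m$ but must square $f$ and invoke that $\Ac$ has no zero divisors — this is exactly where irreducibility of $X$ enters. Everything else is formal bookkeeping with the Leibniz rule together with the structural facts recalled in Theorem~\ref{wellknown}.
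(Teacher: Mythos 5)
Your proof is correct and follows essentially the same route as the paper: show $\Tor_{\Ac}(M)$ is $\Vc$-stable by squaring the annihilating function, use Noetherianity to get a single nonzero $f$ killing all of $T$, observe via the Leibniz rule that derivatives of $f$ also kill $T$, and invoke the simplicity of $\Ac$ as an $\Ac\#U(\Vc)$-module. Your packaging of the last step as ``the annihilator ideal $I$ is a nonzero $\Vc$-stable ideal, hence equals $\Ac$'' is just a cleaner way of stating the paper's ``apply $\sum g_i\otimes u_i$ with $(\sum g_i\otimes u_i)(f)=1$ and use the vanishing relation repeatedly.''
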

\begin{proof}
  We first show that $\Tor_{\Ac}(M)$ is an $\Ac\Vc$ module. To this end let $m\in \Tor_{\Ac}(M)$ and let $f.m=0$ for some non zero divisor $f\in \Ac$. Then, for $\eta \in \Vc$ we have
  \begin{align*}
f^2.\eta.m&=-\eta(f^2).m+\eta.(f^2.m)\\
&=-2\eta(f)f.m+0\\
&=0
\end{align*}
and since $f^2$ is a non-zero divisor,  we see that $\eta.m \in \Tor_{\Ac}M$.
Since $M$ is finitely generated as an $\Ac$ module, then $\Tor_{\Ac}M$ is also finitely generated, say  $m_1, \ldots, m_n$. Then therefore there exist non-zero divisors $f_1, \ldots, f_n \in \Ac$ such that $f_im_i=0$, $1\leq i\leq n$. Now if $f=f_1f_2\cdots f_n$, then $f.\Tor_{\Ac}M=0$. Let $m'\in \Tor_{\Ac}M$ and $\eta\in \Vc$, then
\begin{align}\label{tozero}
\eta(f).m'&=\eta.(f.m')-f.(\eta.m')=0
\end{align}
Now by part~\ref{AisAVsimple} of Theorem~\ref{wellknown} we can find
$\sum g_i \otimes u_i \in \Ac \# U(\Vc)$ such that $(\sum g_i \otimes u_i)(f)=1$.  Using equation~(\ref{tozero}) repeatedly, we obtain $ 1.m'=0$. Hence $\Tor_{\Ac}M=0$.
\end{proof}

%% \begin{comment}%%%%%%%%%%%%%%%
%% \begin{definition}
%% Let $M$ be a finitely generated $\Ac$- module. An element $f\in M$ is called regular on $M$ if $f$ defines an injective but not surjective map $M\xrightarrow{f} M$.  Furthermore, $M$ is called maximal Cohen-Macaulay if there exists a sequence $f_1, \ldots, f_n \in \Ac$ such that $f_i$ is regular on $\frac{M}{f_1M+\cdots+f_{i-1}M}$.
%% \end{definition}
%% \end{comment}%%%%%%%%%%%%%%%%%

\begin{definition}
Let $A$ be a local ring with maximal ideal $\mf$ and let $M$ be a finitely generated  $A$ module. Then $\depth(M)$  is the supremum of the lengths of sequences $f_1, \ldots, f_r\in \mf$  such that $f_i$ is a nonzero divisor on $\frac{M}{f_1M+\cdots f_iM}$. The module $M$ is called maximal Cohen-Macaulay if $\depth(M)=\dim(A)$.  More generally, a module $M$ over a commutative ring $A$ is maximal Cohen-Macaulay if $M_\pf$ is maximal Cohen-Macaulay module over $A_\pf$ for all $\pf \in \Spec A.$ 
\end{definition}

\begin{proposition}\label{Av-CM}
Let $\Ac$ be  the algebra of polynomials over $x_1,\ldots,x_n$ and let  $\Vc=\Der \Ac=\bigoplus\limits_{i=1}^n \Ac\dfrac{\partial}{\partial x_i}$ be the Lie algebra of derivations of $\Ac$. If $M$ is an $\Ac\Vc$ module of finite type, then $M$ is a maximal Cohen-Macaulay $\Ac$ module.
\end{proposition}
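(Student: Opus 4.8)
The plan is to reduce the statement about an arbitrary $\Ac\Vc$ module of finite type to a statement about local rings, and then to run an induction on Krull dimension using the ``prime avoidance'' trick that controls depth. First I would recall that maximal Cohen-Macaulayness is a local condition, so it suffices to show that for every prime $\pf\in\Spec\Ac$ the localization $M_\pf$ has depth equal to $\dim\Ac_\pf=\operatorname{ht}\pf$. Since $M_\pf$ is a finitely generated module over the regular local ring $\Ac_\pf$, the Auslander--Buchsbaum formula gives $\depth M_\pf + \operatorname{pd}_{\Ac_\pf} M_\pf = \dim\Ac_\pf$, so the proposition is equivalent to the assertion that $M$ is a projective $\Ac$ module; combined with Lemma~\ref{Tor of Av mod} (no torsion) and the fact that finitely generated projective modules over a polynomial ring are free (Quillen--Suslin), this would also immediately yield the freeness claimed in the abstract.

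The core of the argument should be an induction showing $\depth_{\Ac}M\geq n$, which for a finitely generated module over the $n$-dimensional ring $\Ac$ forces maximal Cohen-Macaulayness. The base case is Lemma~\ref{Tor of Av mod}: $\Tor_{\Ac}(M)=0$ means that $M$ has a nonzerodivisor, i.e.\ $\depth M\geq 1$. For the inductive step, suppose $f_1,\dots,f_r$ is a regular sequence on $M$ with $r<n$ and set $\overline M = M/(f_1,\dots,f_r)M$. The point is to produce one more nonzerodivisor on $\overline M$. If $\overline M$ had depth $0$, then every element of the maximal ideal of each relevant localization would be a zerodivisor, i.e.\ $\mf\subseteq\bigcup_{\pf\in\operatorname{Ass}\overline M}\pf$; by prime avoidance $\mf$ would be contained in some associated prime $\pf$ of $\overline M$. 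The strategy is to show this is impossible by exploiting the $\Vc$-action: the associated primes of $\overline M$ should be forced to be $\Vc$-stable (one uses the Leibniz rule, exactly as in the proof of Lemma~\ref{Tor of Av mod}, to show that the set of elements killed by a given prime, or the radical of the annihilator, is preserved by derivations), but the only $\Vc$-stable radical ideals of $\Ac=\kf[x_1,\dots,x_n]$ are $0$ and $\Ac$ (this is essentially simplicity of the $\Vc$-action, a version of part~\ref{AisAVsimple} of Theorem~\ref{wellknown}), contradicting $0\neq\pf\neq\Ac$.

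Making the $\Vc$-stability of associated primes precise is where I expect the real work to be, since $\overline M$ is only an $\Ac$ module, not an $\Ac\Vc$ module (quotienting by $f_iM$ destroys the $\Vc$-action). The fix I would pursue is to not pass to the naive quotient but instead to argue inductively on the module-theoretic structure while staying inside the $\Ac\Vc$ category, or equivalently to reformulate the induction as: for any $\Ac\Vc$ module $N$ of finite type with $\Tor_{\Ac}N=0$ and any nonzerodivisor $f\in\Ac$, the quotient $N/fN$ — although no longer an $\Ac\Vc$ module — still has all associated primes $\Vc$-stable. To see this I would take an associated prime $\pf = \operatorname{Ann}(\bar n)$ of $N/fN$, lift $\bar n$ to $n\in N$, and compute for $g\in\pf$ and $\eta\in\Vc$ that $\eta(g)\bar n = \overline{\eta(gn)} - \overline{g\,\eta(n)} \in \overline{fN + \pf N}$; a short manipulation (using that $g n\in fN$ and $f$ is a nonzerodivisor) should show $\eta(g)\in\pf$, so $\pf$ is $\Vc$-stable. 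Then the simplicity argument of Theorem~\ref{wellknown}(3)--(4) — apply derivations to a nonzero element of $\pf$ to produce a unit, using Hilbert's Nullstellensatz — shows $\pf=0$ or $\pf=\Ac$; since $\pf$ is a genuine associated prime of a nonzero module it is proper, and since $N/fN\neq 0$ (by Nakayama, $fN\neq N$) it is nonzero, a contradiction unless $N/fN$ already has depth $\geq 1$, i.e.\ admits a further nonzerodivisor. Iterating $n$ times starting from $M$ gives a regular sequence of length $n$ in $\mf$ after localizing at the irrelevant maximal ideal, hence $\depth M=n=\dim\Ac$ there, and the same argument localized at an arbitrary $\pf$ gives $\depth M_\pf = \operatorname{ht}\pf$, completing the proof.
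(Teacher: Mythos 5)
Your overall framework (localize, use Auslander--Buchsbaum, build a regular sequence of length $n$) is fine, but the key lemma on which the inductive step rests is false: the associated primes of $N/fN$ are \emph{not} $\Vc$-stable in general, even when $N$ is a torsion-free $\Ac\Vc$ module of finite type and $f$ is a nonzerodivisor. The simplest counterexample is $N=\Ac$ itself with $f=x_1$: then $N/fN\cong\kf[x_2,\ldots,x_n]$ has the single associated prime $\pf=(x_1)$, and $\dd{x_1}(x_1)=1\notin\pf$. Tracing your proposed computation in this example shows exactly where it breaks: with $g=f=x_1$, $n=1$, $gn=f\cdot n'$ with $n'=1$, one gets $\eta(g)\bar n=\overline{\eta(f)n'}-\overline{g\,\eta(n)}=\bar 1\neq 0$, so no ``short manipulation'' can force $\eta(g)\in\pf$; the term $\overline{\eta(f)n'}$ survives. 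There is also a structural warning sign: your inductive step never uses the hypothesis $r<n$, so if it worked it would produce regular sequences of arbitrary length on $M$, contradicting $\depth M\le\dim\Ac=n$. The step from ``$\pf$ is $\Vc$-stable'' to ``$\pf=0$ or $\Ac$'' is correct, but the stability claim itself is the unprovable (indeed false) part.

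The paper's proof avoids this by inducting on the number of variables rather than on the length of the regular sequence, and by choosing the regular element to be the coordinate $x_1$ rather than an arbitrary nonzerodivisor. After checking that $x_1$ is regular on $M$ (injectivity from Lemma~\ref{Tor of Av mod}, and $x_1M\neq M$ via a Noetherianity argument --- otherwise $M$ would carry a $\kf[x_1,\ldots,x_n,x_1^{-1}]$-module structure and contain a non-finitely-generated $\Ac$-submodule), it observes that $M/x_1M$ is again an $\Ac'\Vc'$ module of finite type for the \emph{smaller} pair $\Ac'=\kf[x_2,\ldots,x_n]$, $\Vc'=\Der(\Ac')$, because vector fields with coefficients in $\kf[x_2,\ldots,x_n]$ not involving $\dd{x_1}$ preserve the ideal $(x_1)$ and hence descend to the quotient. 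The induction hypothesis then gives $\depth(M/x_1M)=n-1$ over $\Ac'$, hence $\depth M_\mf=n$. This retention of a (smaller) compatible Lie algebra action on the quotient is precisely the structure your argument discards and then cannot recover; if you want to repair your proof, you must quotient by coordinates and shrink $\Vc$ along the way rather than quotient by arbitrary regular elements and appeal to $\Vc$-stability.
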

\begin{proof} We proceed by induction on the dimension $n$.  Without loss of generallity, we let $\mf = (x_1,\ldots, x_n)$.
  By Lemma~\ref{Tor of Av mod} we see that   $x_1$ is a non-zero divisor of  $M$. Now consider the map $M \xrightarrow{x_1} M$ defined by multiplication by $x_1.$ Suppose that this map is surjective.  Since $M$ is torsion free, the map is injective.
  Let $x_1^{-1}:M\rightarrow M$ represent the inverse map. Then $M$ has the structure of a $\kf[x_1, \ldots,x_n,x_1^{-1}]$ module.  Let $m \in M$ with $m \neq 0$.  Then  $\kf[x_1, \ldots,x_n,x_1^{-1}]m$ is an $\Ac$-submodule of $M$ which is not finitely generated which violates the property of $M$ being Noetherian. So we can conclude that the map given by multiplication by $x_1$ is not surjective and is therefore a regular element on $M$.  Now consider the module
  $M' = \frac{M}{x_1M}.$  Note that $M'$ is an $\Ac\Vc$ module for $\Ac = \kf[x_2,\ldots,x_n] \simeq \kf[x_1,\ldots,x_n]/(x_1)$ and $\Vc = \Der(\Ac)$ since
  derivations on $\Ac$ map to derivations on $\kf[x_1,\ldots,x_n]$ that preserve $(x_1)$.  So by induction
  $\depth(M') = \dim(\Ac_\mf)=n-1$ and we are done.
\end{proof}

%% %\begin{comment}%%%%%%%%%%%%%%%
%% %Above proposition can be generalizes as follows.

%% \begin{proposition}\label{Av-CM}
%% Let $\Ac$ be a regular algebra and $\Vc=\Der \Ac=\bigoplus\limits_{i=1}^n \Ac\dfrac{\partial}{\partial x_i}$ be the Lie algebra of derivations of $\Ac$. If $M$ is an $A\Vc$ module of finite type, then $M$ is maximal Cohen-Macaulay.
%% \end{proposition}
%% \end{comment}%%%%%%%%%%%%%%%%%

The following remark is restating~\cite[Corollary 19.6 \& Theorem 19.9]{E}.
\begin{proposition}\label{Auslander formula}
If $(A,\mf)$ is a regular local ring of finite dimension, then its global dimension is finite and therefore the projective dimension of any $A$ module is finite. Further, if $M$ is a finitely generated $A$ module, then
\[
pd~M=\dim(R) - depth(M)
\]
\end{proposition}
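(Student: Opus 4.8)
The plan is to derive the statement from two classical facts that together form the homological characterization of regular local rings: Serre's theorem that a regular local ring has finite global dimension, and the Auslander--Buchsbaum formula. Once both are available, the displayed equality follows by noting that a regular local ring is Cohen--Macaulay, so that $\depth A=\dim A$, and that finiteness of the global dimension forces $\operatorname{pd}_A M<\infty$ for every $A$-module $M$, in particular for every finitely generated one. (Here $\dim R$ in the statement should read $\dim A$.)

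First I would show $\operatorname{gl.dim}(A)<\infty$. Write $k=A/\mf$ for the residue field. The reduction $\operatorname{gl.dim}(A)=\operatorname{pd}_A(k)$ is standard: for a finitely generated $M$ the $i$-th Betti number of a minimal free resolution equals $\dim_k\Tor^A_i(M,k)$, and computing $\Tor$ from a free resolution of the second argument $k$ of length $\operatorname{pd}_A(k)$ shows $\Tor^A_i(M,k)=0$ for $i>\operatorname{pd}_A(k)$; combined with Nakayama's lemma (which says the relevant syzygy is free) this gives $\operatorname{pd}_A M\le\operatorname{pd}_A(k)$. So it suffices to bound $\operatorname{pd}_A(k)$, which I would do by induction on $d=\dim A$. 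If $d=0$ then $A$ is a field and $\operatorname{pd}_A(k)=0$. If $d>0$, choose $x$ in a regular system of parameters, so $x\in\mf\setminus\mf^2$; since a regular local ring is a domain, $x$ is a nonzerodivisor, and $A/xA$ is again regular, of dimension $d-1$. A change-of-rings argument (lifting a minimal $A/xA$-free resolution of $k$ along $A\to A/xA$, or the Cartan--Eilenberg spectral sequence) gives $\operatorname{pd}_A(k)\le 1+\operatorname{pd}_{A/xA}(k)=d$, completing the induction.

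Next I would prove the Auslander--Buchsbaum formula $\operatorname{pd}_A M+\depth M=\depth A$ for $M\neq 0$ finitely generated with $\operatorname{pd}_A M<\infty$ (automatic here by the previous step). Induct on $p=\operatorname{pd}_A M$, using Rees' description $\depth N=\min\{\,i:\Ext^i_A(k,N)\neq 0\,\}$ for $N\neq 0$ finitely generated. If $p=0$, then $M$ is finitely generated projective over a local ring, hence free, so $\depth M=\depth A$. If $p\ge 1$, take a minimal presentation $0\to M'\to F\to M\to 0$ with $F$ free of finite rank and $\operatorname{pd}_A M'=p-1$, so $\depth M'=\depth A-p+1$ by induction; running the long exact sequence $\Ext^\bullet_A(k,-)$ along this short exact sequence and comparing the first nonvanishing degrees (using $\depth F=\depth A$) yields $\depth M=\depth A-p$, with the borderline case $\depth M=0$ handled separately: it forces $p=\depth A$ via $\HOM_A(k,F)=0$ when $\depth A\ge 1$, and the case $\depth A=0$ is trivial since then $p=0$. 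Finally, because a regular system of parameters is an $A$-regular sequence, $A$ is Cohen--Macaulay, so $\depth A=\dim A$; substituting gives $\operatorname{pd}_A M=\dim A-\depth M$.

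The conceptual heart, and the step I expect to be the genuine obstacle, is the change-of-rings inequality $\operatorname{pd}_A(k)\le 1+\operatorname{pd}_{A/xA}(k)$, i.e.\ controlling how projective resolutions behave upon quotienting by the nonzerodivisor $x$; this is exactly the content of Serre's theorem. The only other delicate point is the index bookkeeping in the long exact $\Ext$ sequence in the Auslander--Buchsbaum induction, especially the low-depth boundary cases. Both are routine in the homological-algebra literature, and in the present context it is entirely appropriate to invoke them from~\cite{E} rather than reproduce the arguments.
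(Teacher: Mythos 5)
Your proposal is correct and matches the paper's approach: the paper offers no proof at all, simply citing Eisenbud's Corollary~19.6 (Serre's theorem on finite global dimension of regular local rings) and Theorem~19.9 (the Auslander--Buchsbaum formula), which are precisely the two classical facts you combine, and your sketches of their proofs are the standard textbook arguments. You also correctly flag the typo that $\dim(R)$ should read $\dim(A)$; the only point worth adding is that finiteness of $\operatorname{pd}$ for \emph{arbitrary} (not finitely generated) modules uses Auslander's reduction of global dimension to cyclic modules, which your $\Tor$ argument covers only implicitly.
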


\begin{corollary}\label{CM-projective}
If $M$ is a finitely generated maximal Cohen-Macaulay module over a local regular ring $(A, \mf)$, then $M$ is projective.
\end{corollary}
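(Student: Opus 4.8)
The plan is to deduce this immediately from the Auslander--Buchsbaum formula as recalled in Proposition~\ref{Auslander formula}. Since $M$ is maximal Cohen--Macaulay over the local regular ring $(A,\mf)$, by definition $\depth(M) = \dim(A)$. Plugging this into the formula $pd~M = \dim(A) - \depth(M)$ gives $pd~M = 0$, i.e., $M$ is projective as an $A$ module. This is the whole argument; there is essentially no obstacle, since the substantive content has been pushed into Proposition~\ref{Auslander formula}.

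The only point requiring a word of care is the hypothesis of Proposition~\ref{Auslander formula}: it asks for a \emph{regular local ring of finite dimension}. A regular local ring in the usual sense (Noetherian, with $\dim = \dim_{A/\mf}(\mf/\mf^2)$) automatically has finite Krull dimension, so this is not an extra assumption; I would simply note that $A$ being regular local gives finite global dimension, hence $pd~M < \infty$, and then apply the formula. One could also remark that in the application of interest $A = \Ac_\pf$ is a localization of a polynomial ring, which is manifestly regular of finite dimension.

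Finally I would record the standard fact that over a Noetherian local ring a finitely generated module of projective dimension zero is free, so the conclusion ``projective'' can equivalently be stated as ``free'' — this is how it will be used together with Proposition~\ref{Av-CM} to conclude that finite-type $\AV$ modules on affine space are free $\Ac$ modules (one checks projectivity, hence local freeness, at every prime, and a finitely generated projective module over the polynomial ring $\Ac$ is free). So the proof is: by Proposition~\ref{Auslander formula}, $pd_A M = \dim A - \depth M = \dim A - \dim A = 0$, whence $M$ is projective, and being finitely generated over the local ring $A$ it is in fact free.
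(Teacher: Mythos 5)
Your argument is exactly the one the paper intends: the corollary is stated as an immediate consequence of Proposition~\ref{Auslander formula}, and your computation $pd\,M=\dim A-\depth M=0$ is that deduction made explicit. Your added remarks on finite global dimension and on ``projective = free'' over a local ring are correct and consistent with how the paper uses the result.
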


\begin{proposition}\label{pro-free}
If $M$ is a finitely generated projective module over a ring of polynomials, then by~\cite[Theorem 4]{D}
 $M$ is free.
\end{proposition}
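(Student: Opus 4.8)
The statement is the Quillen--Suslin theorem (the affirmative solution of Serre's conjecture): a finitely generated projective module over a polynomial ring $\kf[x_1,\ldots,x_n]$ over a field is free. The plan is therefore to invoke it through the cited reference \cite{D}, and for the purposes of this paper nothing more is required. Should one want a self-contained account, I would argue by induction on the number of variables $n$, proving that every finitely generated projective module over $\kf[x_1,\ldots,x_n]$ is free.

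The base case $n=0$ is immediate, since a finitely generated projective module over the field $\kf$ is a finite-dimensional vector space, hence free. For the inductive step I would write $\kf[x_1,\ldots,x_n]=S[t]$ with $S=\kf[x_1,\ldots,x_{n-1}]$ and apply Quillen's local--global patching principle: a finitely presented $S[t]$-module $M$ such that $M_{\mf}$ is extended from $S_{\mf}$ for every maximal ideal $\mf\subset S$ is itself extended from $S$, i.e. $M\cong S[t]\otimes_S (M/tM)$. Here $M/tM$ is finitely generated projective over $S$, hence free by the induction hypothesis, and therefore $M$ is free over $S[t]$. The hypothesis of the patching principle would be verified using Horrocks' theorem: when $S_{\mf}$ is local, every finitely generated projective $S_{\mf}[t]$-module is extended from $S_{\mf}$ (equivalently free, since finitely generated projective modules over a local ring are free). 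Alternatively, one could run Suslin's argument via completion of unimodular rows to an invertible matrix.

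The hard part will be exactly this local input---Horrocks' theorem together with the patching lemma---since these carry the genuine content of Quillen--Suslin; they are precisely what \cite[Theorem 4]{D} supplies. Granting them, the induction and the final remark that a module extended from $S$ which is itself free over $S$ is free over $S[t]$ are routine and involve no computation particular to our setting. Accordingly, I would present only the citation in the body of the paper, relegating the sketch above to a remark if a self-contained treatment were desired.
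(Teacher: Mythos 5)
Your proposal is correct and matches the paper exactly: the proposition is the Quillen--Suslin theorem, and the paper offers no proof beyond the citation to \cite[Theorem 4]{D}, just as you recommend. Your optional sketch of the standard induction via Quillen patching and Horrocks' theorem is accurate but not needed here.
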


Considering Proposition~\ref{Av-CM} and Remark~\ref{pro-free}, we get the following corollary.
\begin{corollary}
\label{free}
Let $\Ac$ be  the algebra of polynomials over $x_1,\ldots,x_n$ and let  $\Vc=\Der \Ac=\bigoplus\limits_{i=1}^n \Ac\dfrac{\partial}{\partial x_i}$ be the Lie algebra of derivations of $\Ac$. If $M$ is an $\Ac\Vc$ module of finite type, then $M$ is a free $\Ac$ module
\end{corollary}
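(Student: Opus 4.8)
The plan is to chain together the three preceding results; no genuinely new argument is needed. First I would invoke Proposition~\ref{Av-CM}: an $\Ac\Vc$ module $M$ of finite type over $\Ac = \kf[x_1,\ldots,x_n]$ is maximal Cohen--Macaulay. By the definition adopted above, this means precisely that for every prime $\pf \in \Spec \Ac$ the localization $M_\pf$ is a (finitely generated) maximal Cohen--Macaulay module over $\Ac_\pf$.

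Next I would observe that $\Ac_\pf$ is a regular local ring of finite Krull dimension, being a localization of a polynomial ring over a field. Hence Corollary~\ref{CM-projective} applies and shows that $M_\pf$ is projective, and therefore free, over $\Ac_\pf$, for every $\pf$. Since $M$ is finitely generated over the Noetherian ring $\Ac$, it is finitely presented, and a finitely presented module that is free at every prime localization is projective; so $M$ is a finitely generated projective $\Ac$ module. Applying Proposition~\ref{pro-free} (Quillen--Suslin) then gives that $M$ is free, which is the claim.

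The point worth emphasizing is that all the substance lies in Proposition~\ref{Av-CM} (which itself rests on the torsion-freeness of Lemma~\ref{Tor of Av mod} together with the induction that extracts a regular element) and in the deep input of Proposition~\ref{pro-free}. The only step internal to the corollary that requires any care is the passage from ``locally free at every prime'' to ``projective'', and this is exactly where the finite-type hypothesis is indispensable: it upgrades ``finitely generated'' to ``finitely presented'' over the Noetherian ring $\Ac$. Beyond this bookkeeping I do not anticipate a serious obstacle.
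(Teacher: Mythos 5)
Your proposal is correct and follows exactly the paper's route: Proposition~\ref{Av-CM} gives maximal Cohen--Macaulay, Corollary~\ref{CM-projective} (via the Auslander--Buchsbaum formula) gives local projectivity, and Proposition~\ref{pro-free} (Quillen--Suslin) gives freeness. The only difference is that you spell out the local-to-global step (finitely presented plus locally free implies projective), which the paper leaves implicit; that is a welcome clarification, not a deviation.
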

\begin{remark}
  It is a folklore statement that a coherent sheaf with a connection on a smooth variety $X$ is locally free.  Note that a coherent sheaf with connection is a $\Dc$ module, which is finitely generated over $\Ac$. Hence we obtain this folklore result in the case that $X$ is affine $n$-space.
\end{remark}
\end{section}

%every projective module over the ring of polynomials is a free module by Quillen–Suslin theorem.

\begin{section}
{Gauge modules over affine space}

The goal of this section is to prove a conjecture stated in~\cite{BFN} in case when $X = \Ab^n$, showing that every $\AV$ module of a finite type is a gauge module.

The construction of a gauge module involves two ingredients: gauge fields and the action of the Lie algebra $\Lp$.

In what follows, $X = \Ab^n$, $\Ac = \kf[x_1, \ldots, x_n]$,  $\Vc$ is the Witt Lie algebra $W_n$ and $\Ds$ is the Weyl algebra of differential operators on $\Ab^n$.

Let $M$ be a free $\Ac$ module of a finite rank, $M = \Ac \otimes_\kf U$, where $U$
is a finite-dimensional vector space. Let $\rho$ be an $\Ac$-linear representation of the Lie algebra $\Lp$ on $\Ac \otimes U$.

Gauge fields are $\Ac$-linear maps $B_i : \AU \rightarrow \AU$, $i=1,\ldots,n$, 
satisfying
\leqnomode
\begin{align*}
\label{GF1}\tag{GF1}
&\left[ \dx{i} + B_i , \dx{j} + B_j \right] = 0, \\
\label{GF2}\tag{GF2}
&\left[ \dx{i} + B_i \ , \ \rho(\Lp)  \right] = 0. \\
\end{align*}
\reqnomode

\begin{definition}
A gauge module over affine space $\Ab^n$ is a free $\Ac$ module of a finite rank,
$M = \AU$, with an $\Ac$-linear action $\rho$ of Lie algebra $\Lp$ and gauge fields
$\{ B_i \}$, $i = 1, \ldots, n$, with the following action of Lie algebra $\Vc$:
\begin{equation}
\left( f \dx{i} \right) (g u) = 
f \frac{\partial g}{\partial x_i} u
+ fg B_i (u)
+ \sum\limits_{k \in \Zb^n_+ \backslash \{ 0 \}} \frac{1}{k!} g  \frac{\partial^k f}{\partial x^k}
\rho \left( x^k \frac{\partial}{\partial x_i} \right) u ,
\end{equation}
where $f, g \in \Ac$, $u\in  U$.
\end{definition}

The following lemma shows that the sum in the above formula is actually finite, so it is well-defined.

\begin{lemma}
Let $\rho$ be an $\Ac$-linear representation of $\Lp$ on $\Ac \otimes_\kf U$, where 
$\dim_\kf U < \infty$. Then there are only finitely many 
$k \in  \Zb^n_+ \backslash \{ 0 \}$ for which 
$\rho \left( x^k \frac{\partial}{\partial x_i} \right) \neq 0$.
\end{lemma}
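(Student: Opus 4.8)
The plan is to exploit the grading on $\Vc$ and the $\Ac$-linearity of $\rho$ to turn the statement into a finiteness claim about a finite-dimensional piece. Recall $\Lp \simeq \Vc_0 \oplus \Vc_1 \oplus \cdots$ with $\Vc_d$ spanned by the vector fields $x^k \dx{p}$ with $|k| = d+1$. Since $\rho$ is $\Ac$-linear, for a fixed degree $d$ the operators $\rho(x^k\dx{p})$ with $|k| = d+1$ are all determined by $\Ac$-linear endomorphisms of $\Ac\otimes U$, i.e.\ by elements of $\Ac\otimes\End_\kf(U)$. The key observation is that applying an element of $\Lp$ of positive degree to $\rho$ of high degree produces, via $\rho$ being a representation and the bracket formula $[x^k\dx{p},x^l\dx{q}] = l_p x^{k+l-\eps_p}\dx{q} - k_q x^{k+l-\eps_q}\dx{p}$, relations forcing the high-degree $\rho$'s to vanish. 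The main structural input I would isolate is that $\Vc_0 \cong \mathfrak{gl}_n$ acts on $\bigoplus_d \Vc_d$ with each $\Vc_d$ finite-dimensional, and that $\rho$ restricted to $\Vc_0$ is an $\Ac$-linear action, hence (by $\Ac$-linearity) comes from a finite-dimensional representation of $\mathfrak{gl}_n$ on $U$ over each fiber.

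The steps I would carry out, in order: First, I would reduce to showing that $\rho(\Vc_d) = 0$ for all $d$ sufficiently large, since each $\Vc_d$ is finite-dimensional, this gives the lemma. Second, I would consider the element $E = \sum_i x_i \dx{i} \in \Vc_0$, the Euler operator; it acts on $\Vc_d$ as the scalar $d$ (up to a shift), and $[\rho(E), \rho(x^k\dx{p})] = \rho([E, x^k\dx{p}]) = d\,\rho(x^k\dx{p})$ when $|k| = d+1$. But $\rho(E)$ is an $\Ac$-linear operator on $\Ac\otimes U$, i.e.\ an element of $\Ac\otimes\End_\kf(U)$, which is a \emph{fixed} operator; the operator $\mathrm{ad}\,\rho(E)$ on the (infinite-dimensional, but degree-by-degree finite) space $\rho(\Lp)$ can only have finitely many eigenvalues among those appearing in any finitely generated submodule. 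Third — and this is where the real work sits — I would argue that $\rho(\Lp)$, being generated as a Lie algebra by $\rho(\Vc_0)$ and $\rho(\Vc_1)$ (since $\Lp$ is generated by $\Vc_0$ and $\Vc_1$: any $\Vc_d$ for $d \geq 2$ is obtained by bracketing $\Vc_1$ with $\Vc_{d-1}$), lies inside the associative subalgebra of $\End_\Ac(\Ac\otimes U) \cong \Ac\otimes\End_\kf(U)$ generated by the finitely many operators $\rho(\Vc_0 \cup \Vc_1)$. Then $[\rho(E), -]$ acts on this finitely generated algebra with finitely many eigenvalues, and since $\rho(\Vc_d)$ sits in the eigenspace for eigenvalue $d$, we get $\rho(\Vc_d) = 0$ for $d$ large.

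I expect the main obstacle to be making precise the claim that $\rho(\Lp)$ is "finitely generated" in the relevant sense and that $\mathrm{ad}\,\rho(E)$ therefore has only finitely many eigenvalues on it. The subtlety is that $\Ac\otimes\End_\kf(U)$ is not finite-dimensional, so I cannot naively invoke finiteness of the spectrum of a linear operator. The cleanest fix is probably to pass to a single fiber: for each point $a \in \Ab^n$, evaluating at $a$ gives a genuine finite-dimensional representation $\rho_a$ of $\Lp$ on $U$; a classical fact (Rudakov, or the theory of $\mathbb{Z}$-graded Lie algebras acting on finite-dimensional spaces) is that a finite-dimensional representation of $\Lp = W_n^{\geq 0}$ must kill $\Vc_d$ for $d$ large, because the positive part acts "locally nilpotently with a bound" forced by $\mathfrak{gl}_n$-weight considerations and $\dim U < \infty$. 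Since the bound can be taken uniform in $a$ (it depends only on $\dim U$ and the weights of the $\mathfrak{gl}_n$-module $U$, which are finitely many), we conclude $\rho(x^k\dx{i}) = 0$ as an operator on $\Ac\otimes U$ once $|k|$ exceeds this bound. I would present the fiberwise argument as the core, with the grading and Euler-operator bookkeeping as the scaffolding.
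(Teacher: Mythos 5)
Your underlying mechanism is the right one: the Euler element $E=\sum_i x_i\dd{x_i}$ satisfies $[\rho(E),\rho(\xi)]=d\,\rho(\xi)$ for $\xi\in\Vc_d$, so in a finite-dimensional representation only finitely many graded pieces of $\Lp$ can act nonzero --- this is exactly the classical fact the paper invokes (Lemma 3.4 of~\cite{B}). The gap is in how you reduce to a finite-dimensional situation. You evaluate at closed points $a\in\Ab^n$ and assert the cutoff degree is uniform in $a$ because it ``depends only on $\dim U$ and the weights of the $\mathfrak{gl}_n$-module $U$, which are finitely many.'' That justification does not hold up: there is no single $\mathfrak{gl}_n$-module $U$ here. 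The fibre representations $\rho_a$ genuinely vary with $a$, and the degrees $d$ in which $\rho_a(\Vc_d)$ can be nonzero are the pairwise differences of eigenvalues of $\rho_a(E)$ --- algebraic functions of $a$, not a fixed finite set. For a single fibre the cutoff is controlled by those eigenvalues, not by $\dim U$ alone: already with $\dim U=2$ and $\rho_a(E)$ having eigenvalues $0$ and $N$, a nonzero $\rho_a(\Vc_N)$ is a priori possible for arbitrarily large $N$. So uniformity over all fibres is precisely what needs proving, and your argument does not supply it.

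The paper avoids the issue by passing to the generic fibre rather than the closed ones: extend scalars to the algebraic closure $\bF$ of the fraction field $\kf(x_1,\ldots,x_n)$, so that $\Ac\otimes_\kf U$ embeds into the finite-dimensional $\bF$-representation $\bF\otimes_\kf U$ of $\bF\otimes_\kf\Lp$; the classical fact is then applied once, over $\bF$, and a nonzero $\Ac$-linear operator on the free module $\Ac\otimes U$ remains nonzero after this base change. If you prefer to keep your closed-fibre setup, the correct repair is different from the one you propose: note that for every $a$ the number (not the size) of degrees $d\geq 1$ with $\rho_a(\Vc_d)\neq 0$ is at most $(\dim U)^2$, that for each $d$ with $\rho(\Vc_d)\neq 0$ the locus of points $a$ where $\rho_a(\Vc_d)\neq 0$ is a nonempty Zariski-open subset of the irreducible space $\Ab^n$, and that finitely many such open sets always have a common point; hence at most $(\dim U)^2$ degrees survive globally. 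Either route closes the gap; as written, the uniformity step is missing.
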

\begin{proof}
Let $\bF$ be the algebraic closure of the field of fractions of $\Ac$. Then $\rho$ extends 
to a finite-dimensional representation $\bro$ on $\bF \otimes_{\kf} U$ of a Lie algebra
$\bF \otimes_\kf \Lp$ over the field $\bF$. By Lemma 3.4 from~\cite{B}, there are finitely many $k$ with $\bro \left( x^k \frac{\partial}{\partial x_i} \right) \neq 0$, and the claim of the lemma follows.
\end{proof}

\begin{remark}
Note that in the definition of a gauge module in~\cite{BFN}, $U$ was taken to be a finite-dimensional representation of $\Lp$. That definition should be generalized as we do here,
and we should rather require $\AU$ to be an $\Ac$-linear representaion of $\Lp$.
\end{remark}

\begin{theorem}
\label{aff-gauge}
Let $X = \Ab^n$. Every $\AV$ module of a finite type is a gauge module.
\end{theorem}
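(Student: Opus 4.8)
The plan is to combine the two main results of the paper. By Corollary~\ref{free}, an $\AV$ module $M$ of finite type is a free $\Ac$ module, so $M \cong \Ac \otimes_\kf U$ for some finite-dimensional $\kf$ vector space $U$. By Theorem~\ref{iso}, the $\smash$-module structure on $M$ is the same as a $\Ds$-module structure together with a commuting action of $\Uc(\Lc_+)$, where the two actions are transported along $\varphi$ and $\psi$. So first I would pull back along $\psi$: define $\rho := (\text{action of } M)\circ\psi_{|_{\Lc_+}}$, an action of $\Lc_+$ on $M$, and note that since $\psi_{|_{\Lc_+}}(x^m\dd{x_p})$ lies in $\Ac \# \Lc_+$ and $M$ is free over $\Ac$, the operators $\rho(x^m\dd{x_p})$ are $\Ac$-linear — this uses that $\Lc_+$ acts on $\Ac$ preserving the maximal ideal at the origin, hence $\psi_{|_{\Lc_+}}$ followed by the smash-product action gives $\Ac$-linear operators. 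Similarly define the $\Ds$-action on $M$ via $\psi_{|_\Ds}$.

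Next I would extract the gauge fields. Since $M = \Ac\otimes U$ is a $\Ds$-module, the operator by which $\dd{x_i}\in\Ds$ acts is a $\kf$-linear endomorphism of $\Ac\otimes U$ satisfying $[\,\dd{x_i}\text{-action}, x_j\,] = \delta_{ij}$ and commuting with multiplication by all $x_j$ up to the Weyl relations; writing this action as $\dd{x_i}\otimes 1 + B_i$ defines $B_i \in \End_\kf(\Ac\otimes U)$, and the commutation relation $[\dd{x_i}\otimes1+B_i, x_j]=\delta_{ij}$ forces each $B_i$ to be $\Ac$-linear. The relation $[\dd{x_i}, \dd{x_j}]=0$ in $\Ds$ then gives exactly (GF1) (equivalently conditions (1) from the introduction, after expanding the bracket and using $\Ac$-linearity of $B_i$), and the commuting of the $\Ds$-action with the $\Lc_+$-action gives (GF2) (condition (2)). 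Thus $(U, \rho, \{B_i\})$ are valid gauge data.

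Finally I would verify that the resulting $\Vc$-action on $M$ coming from $\smash$ agrees with the gauge-module formula in the Definition. For this I apply $\varphi$: the element $x^k\dd{x_p}\in\Vc$ acts on $M$ as $\varphi(x^k\dd{x_p})$ acts, and $\varphi(x^k\dd{x_p}) = x^k\dd{x_p}\otimes 1 + \sum_{0<m\le k}\binom{k}{m}x^{k-m}\otimes x^m\dd{x_p}$ in $\Ds\otimes\Uc(\Lc_+)$. Translating the first summand via the $\Ds$-action gives $x^k(\dd{x_p}\otimes 1 + B_p)$, i.e. the $f\,\partial g/\partial x_i\,u + fg\,B_i(u)$ terms after writing $f = x^k$ (and extending $\Ac$-linearly in $f$), while the second summand gives $\sum_{0<m\le k}\binom{k}{m}x^{k-m}\rho(x^m\dd{x_p})$, which is precisely the Taylor-type sum $\sum_{m\neq 0}\frac{1}{m!}\frac{\partial^m f}{\partial x^m}\rho(x^m\dd{x_i})$ for $f = x^k$ since $\frac{1}{m!}\partial^m(x^k) = \binom{k}{m}x^{k-m}$. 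Because both sides are $\kf$-linear in $f$ and the monomials $x^k$ span $\Ac$, this establishes the identity for all $f$, and $\Ac$-linearity in $g$ is automatic. Hence $M$ is the gauge module attached to $(U,\rho,\{B_i\})$. The main obstacle I anticipate is the bookkeeping in this last verification — correctly matching the combinatorial coefficients in $\varphi$ with the factorials in the gauge formula and checking the $\Ac$-linearity claims for $\rho$ and $B_i$ carefully — but no new idea beyond Theorems~\ref{iso} and Corollary~\ref{free} should be needed.
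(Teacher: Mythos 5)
Your proposal follows essentially the same route as the paper: combine Corollary~\ref{free} with Theorem~\ref{iso}, read off $\rho$ and the gauge fields $B_i$ from the commuting $\Ds$- and $\Lc_+$-actions, and match the $\Vc$-action against the gauge-module formula via $\varphi$ (the paper leaves this last verification implicit, but your coefficient matching $\frac{1}{m!}\partial^m(x^k)=\binom{k}{m}x^{k-m}$ is correct). One correction: the $\Ac$-linearity of $\rho$ does not follow from $\psi_{|_{\Lc_+}}$ landing in $\Ac\,\#\,\Lc_+$ together with freeness (the individual summands $x^{m-k}\#\,x^k\dd{x_p}$ act by non-$\Ac$-linear operators); the right reason, which the paper uses and which you already have available, is that the $\Lc_+$-action commutes with the $\Ds$-action and in particular with multiplication by elements of $\Ac\subset\Ds$.
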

\begin{proof}
Let $M$ be an $\AV$ module of a finite type. By Corollary~\ref{free}, it is a free $\Ac$ module of a finite rank, and we can realize $M$ as $\Ac \otimes_\kf U$, where $U$ is
a finite-dimensional vector space. By Theorem~\ref{iso}, an $\AV$ module
structure on $M$ is equivlent to the existence of commuting actions of the Weyl algebra
$\Ds$ of differential operators on $\Ab^n$ and of the Lie algebra $\Lp$.

Moreover, $\Ac$ module structure on $M$ can be recovered from its $\Ds$ module structure via inclusion $\Ac \subset \Ds$. Since the action of $\Lp$ on $M = \AU$ 
commutes with the action of $\Ds$, it must be $\Ac$-linear.

It remains to show that a $\Ds$ module structure on $\AU$ can be encoded with the gauge fields $\{ B_i \}$. Let us denote the action of $\Ds$ on $M$ by $\theta$. Consider
the maps
$$ \theta \left( \dx{i} \right) : \, U \rightarrow \AU $$
and let $B_i$ be their $\Ac$-linear extensions to $M$:
$$B_i : \, \AU \rightarrow \AU .$$
Since 
$$\theta \left( \dx{i} \right) g u = \theta \left( \dx{i} \right) \theta(g)  u
= \frac{\partial g}{\partial x_i} u + g \theta \left( \dx{i} \right) u,$$
we conclude that 
\begin{equation}
\label{theta-x}
\theta \left( \dx{i} \right)  = \dx{i} \otimes \id_U + B_i 
\end{equation}
on $\AU$. We conclude that maps $\{ B_i \}$ satisfy the definition of gauge fields.

Conversely, a set of gauge fields will define operators $\theta \left( \dx{i} \right)$ on 
$\AU$ via (\ref{theta-x}), which will satisfy relations
$$ \left[ \theta \left( \dx{i} \right), x_j \right]  = \delta_{i,j},$$
Hence operators $ \theta \left( \dx{i} \right)$ and  $x_j$ together will generate the Weyl algebra $\Ds$ acting on $M = \AU$, which will commute with $\Lp$-action. This completes the proof of the theorem.
\end{proof}
\end{section}

\end{document}